\numberwithin{equation}{section}
\newtheorem{Theorem}{Theorem}[section]
\newtheorem{Corollary}[Theorem]{Corollary}
\newtheorem{Lemma}[Theorem]{Lemma}
\newtheorem{Proposition}[Theorem]{Proposition}
{
 \theoremstyle{definition}
 \newtheorem{Definition}[Theorem]{Definition}
 \newtheorem{Remark}[Theorem]{Remark}
}
\newcommand{\cA}{\mathcal{A}}
\newcommand{\ZZ}{\mathbb{Z}}
\newcommand{\bg}{\mathbf{g}}
\newcommand{\bc}{\mathbf{c}}
\newcommand{\bb}{\mathbf{b}}
\newcommand{\id}{\mathrm{id}}
\newcommand{\dynkinradius}{.06cm}
\newcommand{\dynkinstep}{.7cm}
\newcommand{\dynkinlinesep}{.08cm}
\newcommand{\dynkindot}[4]{\fill (\dynkinstep*#1,\dynkinstep*#2) circle (\dynkinradius); \node at (\dynkinstep*#1,\dynkinstep*#2) [label={[label distance=\dynkinradius, label position=#4]\tiny $#3$}] {};}
\newcommand{\dynkinline}[4]{\draw[thin] (\dynkinstep*#1,\dynkinstep*#2) -- (\dynkinstep*#3,\dynkinstep*#4);}
\newcommand{\dynkindotline}[4]{\draw[dotted] (\dynkinstep*#1,\dynkinstep*#2) -- (\dynkinstep*#3,\dynkinstep*#4);}
\newcommand{\dynkindoubleline}[4]{\draw[thin,double distance=\dynkinlinesep,postaction={decorate}] (\dynkinstep*#1,\dynkinstep*#2) -- (\dynkinstep*#3,\dynkinstep*#4);}
\newcommand{\dynkintripleline}[4]{\draw[thin, double distance=\dynkinlinesep*1.5,postaction={decorate}] (\dynkinstep*#1,\dynkinstep*#2) -- (\dynkinstep*#3,\dynkinstep*#4); \draw[thin] (\dynkinstep*#1,\dynkinstep*#2) -- (\dynkinstep*#3,\dynkinstep*#4);}
\newenvironment{dynkin}{\begin{tikzpicture}[baseline=-0.6ex,decoration={markings,mark=at position 0.7 with {\arrow[line width=0.06cm]{angle 60}}}]}{\end{tikzpicture}}
\begin{document}

\allowdisplaybreaks

\newcommand{\arXivNumber}{1604.06286}

\renewcommand{\PaperNumber}{067}

\FirstPageHeading

\ShortArticleName{Exchange Relations for Finite Type Cluster Algebras}

\ArticleName{Exchange Relations for Finite Type Cluster Algebras\\ with Acyclic Initial Seed and Principal Coef\/f\/icients}

\Author{Salvatore STELLA~$^\dag$ and Pavel TUMARKIN~$^\ddag$}

\AuthorNameForHeading{S.~Stella and P.~Tumarkin}

\Address{$^\dag$~IN$d$AM - Marie Curie Actions fellow, Universit\`a ``La Sapienza'', Roma, Italy}
\EmailD{\href{mailto:tella@mat.uniroma1.it}{stella@mat.uniroma1.it}}
\URLaddressD{\url{http://www1.mat.uniroma1.it/people/stella/index.shtml}}

\Address{$^\ddag$~Department of Mathematical Sciences, Durham University, UK}
\EmailD{\href{mailto:pavel.tumarkin@durham.ac.uk}{pavel.tumarkin@durham.ac.uk}}
\URLaddressD{\url{http://www.maths.dur.ac.uk/users/pavel.tumarkin/}}

\ArticleDates{Received April 22, 2016, in f\/inal form July 06, 2016; Published online July 09, 2016}

\Abstract{We give an explicit description of all the exchange relations in any f\/inite type cluster algebra with acyclic initial seed and principal coef\/f\/icients.}

\Keywords{cluster algebras; exchange relations}

\Classification{13F60}

\section{Introduction and main results}
A cluster algebra, as def\/ined by Fomin and Zelevinsky in~\cite{FZ02}, is a commutative ring with a distinguished set of generators called \emph{cluster variables}. Cluster variables are grouped into overlapping collections of the same cardinality (\emph{clusters}) connected by local transition rules called \emph{mutations}. To each mutation corresponds an \emph{exchange relation}: a dependency relation among the cluster variables of two adjacent clusters.
In~\cite{FZ03}, Fomin and Zelevinsky showed that cluster algebras of \emph{finite type}, i.e., those containing only a f\/inite number of cluster variables, are classif\/ied by f\/inite type Cartan matrices.

Given two cluster variables in a cluster algebra, deciding whether they belong to the same cluster or if they can be obtained from one another by a single mutation is, in general, a~hard problem to address. In several special situations though, when suitable combinatorial models exist, such questions become much easier to decide. This is the case, for example, of cluster algebras originating from marked surfaces~\cite{FST08,FT12} and orbifolds~\cite{FST12}, or those having an asso\-ciated cluster category. Here we will consider the case of cluster algebras of f\/inite type with an acyclic initial seed where the answer can be given uniformly using the \emph{compatibility degree} of the corresponding \emph{$\bg$-vectors}.

Knowing that two cluster variables are exchangeable naturally arises the problem of producing the exchange relation they satisfy. Answers to this question exist depending on the available models; for instance, in the surfaces case, these can be expressed in terms of \emph{skein relations}~\cite{MW13}, while for cluster categories one can leverage the \emph{multiplication formula} of \cite{CK08}.

In \cite{YZ08}, using some determinantal identities on the associated Lie group, the authors were able to give explicit formulas for all the \emph{primitive} exchange relations (i.e., those in which cluster variables only appear in one of the two monomials of the right hand side) in any cluster algebra of f\/inite type with an acyclic initial seed. Their recipe works for principal coef\/f\/icients and hence, via separation of additions, for any other choice of coef\/f\/icients. In~\cite{Ste13} the f\/irst author gave a~uniform formula for all the exchange relations in the same class of algebras, albeit only in the coef\/f\/icient-free case. The main goal of the current paper is to improve on this result to deal with principal coef\/f\/icients as well. Namely, given any two exchangeable cluster variables in a~f\/inite type cluster algebra with acyclic initial seed and principal coef\/f\/icients, we give an explicit formula for computing their exchange relation. This exchange relation has also a geometric interpretation in terms of roots and weights of the corresponding root system.

In order to make this more precise, we need to recall a few notions and results from \cite{Ste13,YZ08}.

Let $A=(a_{ij})$ be any f\/inite type Cartan matrix; we denote by $\Gamma$ its Dynkin diagram and by $W=\langle s_1,\dots,s_n\rangle$ the associated Weyl group and simple ref\/lections. To each \emph{Coxeter element} $c=s_{i_1}\cdots s_{i_n}$ in $W$ we can associate a skew-symmetrizable integer matrix $B_c=(b_{ij})_{i,j\in[1,n]}$ by setting
 \begin{gather*}
 b_{ij}= \begin{cases}
 -a_{ij} & \text{if } i\prec_c j, \\
 a_{ij} & \text{if } j\prec_c i, \\
 0 & \text{otherwise},
 \end{cases}
 \end{gather*}
where we write $i\prec_c j$ if and only if $s_i$ precedes $s_j$ in all reduced expressions of $c$. As $c$ varies, we get all possible \emph{acyclic} exchange matrices of the same mutation class. We will denote by $\cA_\bullet(c)$ the cluster algebra with initial exchange matrix $B_c$ and \emph{principal coefficients} at the initial seed.

The algebra $\cA_\bullet(c)$ is $\ZZ^n$-graded; its cluster variables and cluster monomials are homogeneous elements and their \emph{$\bg$-vector} is their homogeneous degree (see~\cite[Section~6]{FZ07}). Let $\omega_i$ be the $i$-th \emph{fundamental weight} in the \emph{weight lattice}~$P$ of~$\Gamma$; we will routinely interpret $\bg$-vectors as weights by writing them in the basis of fundamental weights.

 Let $w_0$ be the longest element of $W$ and denote by $h(i;c)$ the minimum positive integer such that
 \begin{gather*}
 c^{h(i;c)}\omega_i = w_0\omega_i
 \end{gather*}
 (it is a f\/inite number \cite[Proposition~1.3]{YZ08}).
 \begin{Theorem}[{\cite[Theorem~1.4]{YZ08}}]
 The cluster variables of $\cA_\bullet(c)$ are naturally in bijection with the elements of the set
 \begin{gather*}
 \Pi(c) := \big\{ c^m\omega_i \colon i\in[1,n] , \, 0\leq m \leq h(i;c) \big\}.
 \end{gather*}
 To the cluster variable $x_\lambda$ it corresponds its $\bg$-vector $\lambda\in\Pi(c)$.
 \end{Theorem}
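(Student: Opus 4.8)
The plan is to parametrize the cluster variables by their $\bg$-vectors and then identify the set of $\bg$-vectors that occur with $\Pi(c)$. In the principal-coefficient setting of \cite{FZ07} each cluster variable is a homogeneous element, hence has a well-defined $\bg$-vector, and in the present acyclic finite-type case distinct cluster variables carry distinct $\bg$-vectors. Thus it suffices to show that the $\bg$-vectors occurring are exactly the weights listed in $\Pi(c)$ and that each such weight is attained by a unique cluster variable; the assignment $x_\lambda\mapsto\lambda$ is then the desired bijection.

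I would start from the base case. With principal coefficients the $\bg$-vectors of the initial cluster variables $x_1,\dots,x_n$ are the standard basis vectors $e_1,\dots,e_n$, which under the identification of $\bg$-vectors with weights in $P$ are the fundamental weights $\omega_i=c^0\omega_i$. The core of the argument is then to track how $\bg$-vectors evolve under the distinguished sequence of mutations attached to $c=s_{i_1}\cdots s_{i_n}$. The $\bg$-vector mutation rule is piecewise-linear, but the crucial claim is that, applied along the forward orbit of each $\omega_i$, it collapses to the honest linear action of $c$ on the weight lattice: a single Coxeter step carries the $\bg$-vector $c^m\omega_i$ to $c^{m+1}\omega_i$. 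Proving this reduction---checking, via the acyclicity of $B_c$ and the sign-coherence of $\bg$-vectors, that one remains in the region where the tropical rule is given by $c$---is the technical heart of the proof.

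It remains to see that each orbit has exactly the predicted length. By \cite[Proposition~1.3]{YZ08} the integer $h(i;c)$ is finite and $c^{h(i;c)}\omega_i=w_0\omega_i$; since $w_0\omega_i=-\omega_{i^*}$, the terminal weights are precisely the $\bg$-vectors of the cluster variables lying in the cluster opposite to the initial one, and at this stage the Coxeter mutation can no longer produce a genuinely new variable. This yields $\sum_{i}(h(i;c)+1)$ weights. To upgrade this to the stated bijection I would compare the count with the total number of cluster variables in finite type, which by \cite{FZ03} equals the number of almost positive roots, and verify that the weights $c^m\omega_i$ with $0\le m\le h(i;c)$ are pairwise distinct, so that $\Pi(c)$ exhausts---without repetition---the full set of $\bg$-vectors.

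The step I expect to be the main obstacle is exactly the piecewise-linearity of the $\bg$-vector mutation: one must show that along each orbit $c^0\omega_i,\dots,c^{h(i;c)}\omega_i$ the tropical mutation reduces to the linear Coxeter action, and pinpoint the single step, at the terminal weight $w_0\omega_i$, where this linearity is lost and the orbit is forced to stop.
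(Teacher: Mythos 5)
The paper does not actually prove this statement: it is imported verbatim from \cite[Theorem~1.4]{YZ08} as background, so there is no internal proof to compare yours against. For the record, the proof in \cite{YZ08} is quite different in spirit from your plan: Yang and Zelevinsky realize the finite type cluster algebra inside the coordinate ring of the associated Lie group, identify cluster variables with explicit generalized principal minors indexed by the weights $c^m\omega_i$, and obtain the parametrization from determinantal identities (this is exactly what the introduction of the present paper alludes to). Your route---tracking $\bg$-vectors along the source-ordered sequence of mutations attached to $c$ and then counting against the classification of \cite{FZ03}---is a genuinely different, more ``tropical'' strategy, and its skeleton is sound: the initial $\bg$-vectors are the $\omega_i$, and a direct computation at a source $i_1$ of $B_c$ shows the new variable has $\bg$-vector $-\omega_{i_1}-\sum_{j\neq i_1}a_{ji_1}\omega_j=s_{i_1}\omega_{i_1}=c\,\omega_{i_1}$, so the base step of your induction is correct.

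As a proof, however, what you have written has genuine gaps. First, the step you yourself call the ``technical heart''---that the piecewise-linear $\bg$-vector mutation rule collapses to the linear action of $c$ along the entire belt, failing only at the weights $w_0\omega_i=-\omega_{i^*}$---is announced but never carried out; this is precisely where all the work lies (it is the content of the compatibility of $\tau_c$ with the cluster structure mentioned in the paper), and without it nothing is proved. Second, your appeal to ``distinct cluster variables carry distinct $\bg$-vectors'' is dangerously close to circular: in the acyclic finite-type setting this injectivity was historically established by Yang--Zelevinsky and in \cite{Ste13} essentially as part of proving this very parametrization, with the general statement coming only later from scattering-diagram methods of Gross--Hacking--Keel--Kontsevich; you must either invoke a source independent of \cite[Theorem~1.4]{YZ08} or prove injectivity inside your induction. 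Third, the counting argument needs two facts you do not establish: that the weights $c^m\omega_i$ with $0\le m\le h(i;c)$ are pairwise distinct, and that $\sum_i\bigl(h(i;c)+1\bigr)$ equals the number $nh/2+n$ of almost positive roots; both require separate root-system arguments (they too are proved in \cite{YZ08}). In short, your proposal is a credible and essentially correct plan, but every load-bearing step in it is deferred rather than proven.
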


This correspondence extends to a bijection between points of $P$ and \emph{cluster monomials} of~$\cA_\bullet(c)$ (cf.~\cite[Theorem~1.2]{Ste13}); for~$\lambda\in P$ we will denote by~$x_\lambda$ the cluster monomial whose $\bg$-vector is~$\lambda$.

The set $\Pi(c)$ is naturally endowed with a permutation $\tau_c$ def\/ined by
 \begin{gather*}
 \tau_c (\lambda) := \begin{cases}
 \omega_i & \text{if $\lambda = -\omega_i$}, \\
 c\lambda & \text{otherwise},
 \end{cases}
 \end{gather*}
 which extends to a piecewise linear map on the whole of $P$ that is ``compatible'' with the cluster structure of $\cA_\bullet(c)$.
 This is a combinatorial shadow of a notable automorphism of the coef\/f\/icient-free counterpart of $\cA_\bullet(c)$ sending the cluster variable $x_\lambda$ to $x_{\tau_c(\lambda)}$.

 Let $Q$ be the \emph{root lattice} of $\Gamma$ with simple roots $\alpha_i$; as for $P$, we will routinely think of elements in~$Q$ as integer vectors using the basis of simple roots.
 \begin{Definition}
 The \emph{compatibility degree} $(\cdot||\cdot)_c$ is the unique $\tau_c$-invariant function on pairs of elements of $\Pi(c)$ def\/ined by the initial conditions
 \begin{gather*}
 (\omega_i||\lambda)_c := \big[ \big(c^{-1}-\id\big)\lambda ; \alpha_i\big]_+,
 \end{gather*}
 where, for $v$ in $Q$, $[v;\alpha_i]$ denotes the $i$-th coef\/f\/icient of $v$ and $[m]_+$ is a shorthand for $\max\{m, 0\}$ (cf.~\cite[Proposition~5.1]{YZ08}).
 \end{Definition}

 The name comes from the following important property, consequence of the polytopal reali\-za\-tion of the cluster fan of~$\cA_\bullet(c)$ \cite{CFZ02,Ste13}.
 \begin{Proposition}
 Two weights $\lambda$ and $\mu$ from $\Pi(c)$ are
 \begin{itemize}\itemsep=0pt
 \item
compatible $($i.e., there is a cluster of $\cA_\bullet(c)$ containing both~$x_\lambda$ and~$x_\mu)$ if and only if
 \begin{gather*}
 (\lambda||\mu)_c = 0 \qquad
 \text{$($equivalently $(\mu||\lambda)_c=0)$,}
 \end{gather*}

 \item
exchangeable $($i.e., there are two clusters of $\cA_\bullet(c)$ that can be obtained from one-another by swapping~$x_\lambda$ for~$x_\mu)$ if and only if
 \begin{gather*}
 (\lambda||\mu)_c = 1 = (\mu||\lambda)_c.
 \end{gather*}
 \end{itemize}
 \end{Proposition}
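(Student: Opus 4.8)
The plan is to read off both assertions from the geometry of the $\bg$-vector fan $\mathcal{F}(c)$: its rays are the $\bg$-vectors of the cluster variables, i.e.\ the elements of $\Pi(c)$, and, by the extension of the bijection to cluster monomials recalled above, the $\bg$-vectors of the $n$ cluster variables of any cluster span an $n$-dimensional cone, and these cones tile $P\otimes\mathbb{R}$. The essential external input is that, by the polytopal realizations of \cite{CFZ02,Ste13}, $\mathcal{F}(c)$ is a complete simplicial fan (the normal fan of a generalized associahedron). In such a fan, $x_\lambda$ and $x_\mu$ are compatible exactly when the rays $\mathbb{R}_{\ge0}\lambda$ and $\mathbb{R}_{\ge0}\mu$ span a two-dimensional cone of $\mathcal{F}(c)$, and they are exchangeable exactly when there are $n-1$ further rays whose span is a common facet completing each of $\lambda$ and $\mu$ to a maximal cone, the two maximal cones meeting along that facet. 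Since ``spanning a cone'' and ``being flipped across a wall'' are symmetric relations, once I establish each numerical criterion for one ordering the symmetric reformulations in the statement follow automatically; and since none of these notions involves the coefficients, it suffices to work in this fan.

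First I would use $\tau_c$ to put the pair in a normal form. As $\tau_c$ is the combinatorial shadow of an automorphism of the cluster structure it permutes clusters and preserves compatibility and exchangeability, while by definition it preserves $(\cdot||\cdot)_c$. Every $\tau_c$-orbit of $\Pi(c)$ contains a fundamental weight---the element $\omega_i$ opening the chain $\omega_i, c\omega_i, \dots, c^{h(i;c)}\omega_i=w_0\omega_i$---so, after replacing $(\lambda,\mu)$ by $\tau_c^{-m}(\lambda,\mu)$, I may assume $\lambda=\omega_i$. The defining initial condition now gives the closed formula $(\omega_i||\mu)_c=[(c^{-1}-\id)\mu;\alpha_i]_+$ (note $(c^{-1}-\id)\mu\in Q$, so the root coordinate makes sense), and the task becomes to recognise compatibility and exchangeability of $\omega_i$ with $\mu$ from this single integer.

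To do so I would match $(\cdot||\cdot)_c$ with the classical Fomin--Zelevinsky compatibility degree on the almost positive roots $\Phi_{\ge-1}$, using the bijection of \cite{YZ08} under which $\tau_c$ corresponds to the rotation of the cluster complex and the fundamental weights $\omega_i$ correspond to the negative simple roots $-\alpha_i$. Both functions are the unique ones invariant under their respective rotations with the prescribed values at the fundamental weights, respectively the negative simple roots; comparing these initial conditions and invoking uniqueness identifies the two degrees under the bijection. The classical fact that almost positive roots are compatible iff their compatibility degree vanishes, and exchangeable iff it equals $1$ in both orders, then transports back to give the Proposition.

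I expect the exchangeability assertion---the ``$=1$'' case---to be the main obstacle. The vanishing case is essentially the definition of the face structure of the simplicial fan and is the most direct to control. Showing instead that a degree equal to $1$ in both orders detects a genuine single flip, and not merely minimal incompatibility, requires the local analysis of $\mathcal{F}(c)$ near $\mathbb{R}_{\ge0}\omega_i$: one must verify that the rays compatible with both $\omega_i$ and $\mu$ really do span the common facet of two adjacent maximal cones. Combining the simpliciality and completeness of the fan with a careful count of the root coordinates $[(c^{-1}-\id)\mu;\alpha_i]$ for the $\mu$ with $(\omega_i||\mu)_c=1$ is the step I would expect to require the most care.
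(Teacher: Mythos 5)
The first thing to note is that the paper does not actually prove this Proposition: it is quoted as a known fact, a ``consequence of the polytopal realization of the cluster fan'' with a pointer to \cite{CFZ02,Ste13}. So your attempt has to be measured against the content of those references rather than against an argument in the text. Your two preliminary moves are sound and match that external input: the $\bg$-vector fan being a complete simplicial fan whose maximal cones are the clusters is exactly what \cite{CFZ02,Ste13} provide, and the normalization is legitimate --- every $\tau_c$-orbit in $\Pi(c)$ does contain a fundamental weight, and $\tau_c$ preserves compatibility, exchangeability and $(\cdot||\cdot)_c$ because it is the shadow of an automorphism of the coefficient-free algebra. So you may indeed assume $\lambda=\omega_i$.

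The genuine gap is in your matching step, and it sits exactly where the real content of \cite{YZ08,Ste13} lies. You need a single bijection $\psi\colon \Pi(c)\to\Phi_{\geq-1}$ (almost positive roots) that simultaneously (a)~intertwines $\tau_c$ with the classical rotation and sends $\omega_i\mapsto-\alpha_i$, so that your uniqueness argument can run; (b)~satisfies $[(c^{-1}-\id)\mu;\alpha_i]_+=[\psi(\mu);\alpha_i]_+$ for all $i$ and $\mu$, so that the two initial conditions really agree under $\psi$ --- this is not a formal ``comparison'': the left-hand side is a weight computation attached to the Coxeter element $c$, the right-hand side a root coordinate, and their equality says in effect that $\psi$ is the denominator-vector map, which is a theorem of \cite{YZ08}, not a definition; and (c)~is induced by an isomorphism of cluster structures, so that compatibility and exchangeability can be transported back from the bipartite case. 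Property (c) does not follow from (a): the bijection that manifestly preserves the cluster structure is the one induced by the mutation equivalence of $B_c$ with the exchange matrix of a bipartite Coxeter element, and that map sends the initial seed of $\cA_\bullet(c)$ to a \emph{non-initial} acyclic seed, hence does not send $\omega_i$ to $-\alpha_i$; conversely, your equivariantly defined $\psi$ has (a) by fiat but carries no cluster-theoretic meaning until proven otherwise. Producing one map with all three properties is essentially equivalent to the Proposition itself, so as written the plan is circular at its central step. To repair it you would have to do what the cited references do: either prove that $(\omega_i||\mu)_c$ computes the $i$-th denominator exponent of $x_\mu$ and that denominators detect compatibility and exchangeability in finite type, or redo the polytopal construction of \cite{CFZ02} uniformly for arbitrary $c$ as in \cite{Ste13}.
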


 Our starting point is the following restatement of \cite[Proposition 5.1]{Ste13}.
\begin{Proposition}
Suppose $\lambda$ and $\mu$ are exchangeable weights in $\Pi(c)$. Then the set
\begin{gather*}
 \big\{ \tau_c^{-m}\big(\tau_c^m(\lambda)+\tau_c^m(\mu)\big) \big\}_{m\in\ZZ}
 \end{gather*}
consists precisely of two weights. One of them is $\lambda+\mu$; denote the other by $\lambda\uplus_c\mu$.
 \end{Proposition}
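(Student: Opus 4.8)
The plan is to study how the weight
$f_m := \tau_c^{-m}\big(\tau_c^m(\lambda)+\tau_c^m(\mu)\big)$
varies with $m\in\ZZ$. Since $f_0=\lambda+\mu$, the weight $\lambda+\mu$ is automatically in the set, and everything reduces to showing that $\{f_m\}_{m\in\ZZ}$ contains exactly one further value. A direct computation gives
\begin{gather*}
f_{m+1}=f_m \quad\Longleftrightarrow\quad \tau_c\big(\tau_c^m(\lambda)+\tau_c^m(\mu)\big)=\tau_c\big(\tau_c^m(\lambda)\big)+\tau_c\big(\tau_c^m(\mu)\big),
\end{gather*}
so the problem becomes detecting the indices $m$ at which $\tau_c$ fails to be additive on the pair $\big(\tau_c^m(\lambda),\tau_c^m(\mu)\big)$. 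For finiteness I would first record that $\tau_c$ is linear on each maximal cone of the cluster fan of $\cA_\bullet(c)$ (and equal to the Coxeter element $c$ away from the cones adjacent to the negative fundamental weights), and that it permutes these finitely many cones; hence it has finite order $N$ on all of $P$, so $f_{m+N}=f_m$ and $\{f_m\}_{m\in\ZZ}$ is finite.

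Next I would localize the non-additivity. The only rays of $\Pi(c)$ on which $\tau_c$ disagrees with the linear map $c$ are the negative fundamental weights $-\omega_i$, where $\tau_c(-\omega_i)=\omega_i$; consequently $\tau_c$ is additive on $\big(\tau_c^m(\lambda),\tau_c^m(\mu)\big)$ unless this pair, or its sum, meets the cones touching some $-\omega_i$. The $\tau_c$-orbits of $\lambda$ and $\mu$ each pass through this negative-weight locus a controlled number of times, and the hypothesis $(\lambda||\mu)_c=1=(\mu||\lambda)_c$ governs how the two rays straddle the relevant walls. Using this I would show, on the one hand, that non-additivity does occur (at the index where $\tau_c^m(\lambda)$ becomes a negative fundamental weight the defect does not cancel, so the second value is genuinely distinct from $\lambda+\mu$), and on the other hand that \emph{every} occurrence of non-additivity toggles $f_m$ between the same pair of weights, so that no third value can appear.

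The main obstacle is precisely this last uniformity statement: controlling the defect $\tau_c(a+b)-\tau_c(a)-\tau_c(b)$ for $a=\tau_c^m(\lambda)$, $b=\tau_c^m(\mu)$, and proving that after transport by $\tau_c^{-m}$ all the nonzero jumps land on one and the same second weight. A clean way to organize this, which I would pursue in parallel, exploits the covariance $f_m(\tau_c\lambda,\tau_c\mu)=\tau_c\big(f_{m+1}(\lambda,\mu)\big)$, which shows that the whole set rotates equivariantly under $\tau_c$. This lets me rotate the exchangeable pair into a standard position where the associated exchange relation is \emph{primitive}, so that its two right-hand-side monomials — equivalently their $\bg$-vectors $\lambda+\mu$ and $\lambda\uplus_c\mu$ — are explicit (cf.~the formulas of~\cite{YZ08}); ``exactly two'' then becomes the elementary fact that an exchange relation has exactly two terms, and transporting back by $\tau_c$ yields the statement in general.
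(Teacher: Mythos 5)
There is an important mismatch of expectations here: the paper itself gives \emph{no} proof of this Proposition. It is explicitly introduced as a restatement of \cite[Proposition~5.1]{Ste13}, so the only proof on record is the one in that reference, and your attempt has to be judged on its own merits. Your first approach starts soundly: the equivalence $f_{m+1}=f_m$ if and only if $\tau_c$ is additive on the pair $\big(\tau_c^m(\lambda),\tau_c^m(\mu)\big)$ is correct, the finiteness of the order of $\tau_c$ as a piecewise-linear map (it permutes the finitely many cones of the cluster fan and the finitely many rays) is correct, and so is the localization of non-additivity at the cones containing rays $-\omega_i$. But everything after that is a plan, not an argument: you never prove that the defect at the negative-weight locus is nonzero (so ``at least two'' is only asserted), and you concede yourself that the uniformity statement --- that all nonzero jumps, transported back by $\tau_c^{-m}$, land on one and the same second weight --- is missing. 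Since ``the set has exactly two elements'' \emph{is} the proposition, what you have is a correct reformulation plus finiteness of the set, not a proof.

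The fallback via primitive relations is worse: it is not incomplete but unworkable. The automorphism $\sigma$ of the coefficient-free algebra determined by $\sigma(x_\nu)=x_{\tau_c(\nu)}$ sends cluster monomials to cluster monomials and fixes constants, hence it sends the exchange relation of a pair to the exchange relation of the rotated pair and preserves primitivity; so being primitive is an invariant of the whole $\tau_c$-orbit of an exchangeable pair. Non-primitive relations exist already in type $A_3$: in the hexagon model the crossing diagonals $14$ and $26$ satisfy $x_{14}x_{26}=x_{24}+x_{46}$, with both right-hand monomials genuine cluster variables, and the exchange quadrilateral $1246$ has its two boundary edges adjacent --- a configuration every rotation preserves --- so no element of the orbit is primitive. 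Consequently there are exchangeable pairs that can never be rotated into ``primitive position,'' and the reduction you propose collapses, leaving the gap of the first approach open. The repair that actually works (and is in the spirit of \cite{Ste13}) is to bound the set from the start: the automorphism $\sigma^m$ carries the two cluster monomials on the right-hand side of the exchange relation for $(x_\lambda,x_\mu)$ to those for $\big(x_{\tau_c^m\lambda},x_{\tau_c^m\mu}\big)$, and in any exchange relation one of the two monomials has $\bg$-vector equal to the sum of the $\bg$-vectors of the exchanged variables; hence every $f_m$ lies in the two-element set of $\bg$-vectors of the original two monomials, and one only needs to exhibit a single index $m$ where the second monomial is the one realizing the sum --- e.g., when one of $\tau_c^m(\lambda)$, $\tau_c^m(\mu)$ reaches a negative fundamental weight.
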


Let $y_1,\dots,y_n$ be the generators of the coef\/f\/icient semif\/ield of $\cA_\bullet(c)$ and denote by $y^\alpha$ the product $\prod\limits_{i=1}^n y_i^{[\alpha;\alpha_i]}$.

 \begin{Theorem} \label{thm:main} Suppose $\lambda$ and $\mu$ are exchangeable weights in $\Pi(c)$. Then there exists a unique positive root $\alpha$ in the root system of $\Gamma$ such that
 \begin{gather} \label{eq:system}
 -B_c\alpha = \lambda+\mu-\lambda\uplus_c\mu
 \end{gather}
 and
 \begin{gather} \label{eq:dual}
 \langle\lambda,\alpha^\vee\rangle \langle\mu,\alpha^\vee\rangle = -1
 \end{gather}
 $($here $\alpha^\vee$ denotes the \emph{coroot} corresponding to $\alpha$ while $\langle\cdot\,,\cdot\rangle$ is the pairing of dual vector spaces$)$. Moreover the cluster variables $x_\lambda$ and $x_\mu$ of $\cA_\bullet(c)$ satisfy the exchange relation
 \begin{gather} \label{eq:exrel}
 x_\lambda x_\mu = x_{\lambda+\mu} + y^\alpha x_{\lambda\uplus_c\mu}.
 \end{gather}
 \end{Theorem}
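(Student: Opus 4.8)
Separation of additions is the natural engine here. The coefficient-free exchange relation is already known from \cite[Proposition~5.1]{Ste13} to be $x_\lambda x_\mu = x_{\lambda+\mu}+x_{\lambda\uplus_c\mu}$, and since $\bg$-vectors and $F$-polynomials are shared between the coefficient-free and principal-coefficient models, the relation in $\cA_\bullet(c)$ can only decorate this with coefficients: because the coefficient semifield is tropical, it must read
\begin{gather*}
x_\lambda x_\mu = p_1\,x_{\lambda+\mu} + p_2\,x_{\lambda\uplus_c\mu},\qquad p_1=y^{\gamma_1},\ p_2=y^{\gamma_2},\ \gamma_1,\gamma_2\in\ZZ_{\geq0}^n.
\end{gather*}
The whole theorem then reduces to showing $\gamma_1=0$ and that $\gamma_2$ is a positive root $\alpha$ satisfying \eqref{eq:system} and \eqref{eq:dual}.

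First I would pin down the two monomials with the $\ZZ^n$-grading of $\cA_\bullet(c)$, for which $\deg x_\nu=\nu$ and $\deg y_j=-B_c\alpha_j$. Writing each cluster variable through the principal-coefficient separation formula $x_\nu=F_\nu(\hat y_1,\dots,\hat y_n)\,x^\nu$ with $\hat y_j=y_j\,x^{B_c\alpha_j}$ and $F_\nu(0)=1$, the left side becomes $x_\lambda x_\mu=F_\lambda(\hat y)F_\mu(\hat y)\,x^{\lambda+\mu}$, whose monomial of $y$-degree $0$ is $x^{\lambda+\mu}$ with coefficient $1$. On the right this monomial can only arise from the constant term of $F_{\lambda+\mu}$ in the first summand (the second cannot contribute since $\lambda\uplus_c\mu\neq\lambda+\mu$), forcing $\gamma_1=0$, i.e.\ $p_1=1$. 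Homogeneity of the surviving term then gives $-B_c\gamma_2=\lambda+\mu-\lambda\uplus_c\mu$, which is exactly \eqref{eq:system} once we set $\alpha:=\gamma_2$; moreover $\gamma_2\neq0$, as otherwise $\lambda\uplus_c\mu=\lambda+\mu$.

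Next I would identify $\alpha=\gamma_2$ as a positive root. The coefficient of an exchange relation in a principal-coefficient algebra is governed by the $c$-vector of the mutation realizing the swap $x_\lambda\leftrightarrow x_\mu$; sign-coherence makes that $c$-vector (up to sign) nonnegative, and in finite type every $c$-vector is $\pm$ a root. As $\gamma_2\in\ZZ_{\geq0}^n\setminus\{0\}$, it is therefore a positive root, giving existence of $\alpha$ and \eqref{eq:exrel}. For \eqref{eq:dual} I would invoke the tropical duality between the $\bg$-vectors and the $c$-vectors of the seed where the mutation occurs: $\alpha$ is dual to the $\bg$-vector $\lambda$ in one of the two adjacent seeds and to $\mu$ in the other, and since mutating at the exchanged vertex flips the sign of that $c$-vector, the two pairings come out with opposite signs, $\langle\lambda,\alpha^\vee\rangle$ and $\langle\mu,\alpha^\vee\rangle$ equal to $+1$ and $-1$ in some order (as one checks directly already in type $A_2$), so their product is $-1$.

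The real obstacle is \eqref{eq:dual} together with the \emph{uniqueness} of $\alpha$: because $B_c$ is typically singular in finite type (for instance in type $A_3$), \eqref{eq:system} alone does not single out $\alpha$ among positive roots, so one must genuinely exploit the dual pairing. Making the $\bg$/$c$-vector duality and the sign bookkeeping precise---or, alternatively, reducing to the \emph{primitive} relations of \cite{YZ08} through the $\tau_c$-equivariance of all three displayed identities and verifying \eqref{eq:dual} in that base case---is where the argument must be carried out with care.
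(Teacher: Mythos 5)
Your proposal proves only the part of the theorem that the paper itself treats as a consequence of prior work, and leaves the actual content unproved. The existence of a positive root $\alpha$ for which \eqref{eq:exrel} holds, and the necessity of \eqref{eq:system} (homogeneity of exchange relations, $\deg y_i=-\bb_i$) and of \eqref{eq:dual} (tropical duality, \cite[equation~(1.11)]{NZ12}), is exactly what the paper dispatches in the two paragraphs after the theorem statement by citing \cite[Proposition~5.2]{Ste13}, the fact that $\bc$-vectors are roots \cite{NS14}, and sign-coherence; your separation-of-additions and $F$-polynomial bookkeeping is a reasonable reconstruction of that citation trail. What you have not proved is \emph{uniqueness}: that no \emph{other} positive root $\alpha'$ satisfies both \eqref{eq:system} and \eqref{eq:dual} simultaneously. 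Your duality argument concerns only the one root that actually occurs as the $\bc$-vector of the mutation realizing the exchange; it says nothing about a second positive root that might coincidentally solve both equations. You flag this yourself (``the real obstacle\dots where the argument must be carried out with care''), but flagging is not proving, and neither of your suggested routes closes the gap: the $\bg$/$\bc$ duality cannot rule out extraneous solutions, and $\tau_c$-equivariance of \eqref{eq:dual} is itself delicate since $\tau_c$ is only piecewise linear (the case $\lambda=-\omega_i$), and even granted it you would still face the same uniqueness question for the primitive relations.

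The missing argument is the entirety of the paper's Section~\ref{proof}. There the kernel of $B_c$ is computed (Lemma~\ref{lem:dimensions}): it vanishes in even rank outside type $D_n$, so \eqref{eq:system} alone suffices there ($E_7$ is checked by hand); in types $A_n$, $B_n$, $C_n$ of odd rank $n=2k+1\geq 5$ the kernel is spanned by a vector whose support has $k+1\geq 3$ connected components, while the difference of two positive roots has support with at most two components (Lemma~\ref{lem:components}), so again \eqref{eq:system} alone has a unique positive-root solution; small ranks ($A_3$, $B_3$, $C_3$, $D_4$, $D_6$) are settled by inspection. Only in type $D_n$ can \eqref{eq:system} genuinely admit two positive-root solutions, necessarily differing by $\alpha_{n-1}\pm\alpha_n$ (Corollary~\ref{cor:kernel-Dn}), and there \eqref{eq:dual} is exploited through the punctured-disk model: one of $x_\lambda$, $x_\mu$ must be a radius (Lemma~\ref{lem:is-radius}), its $\bg$-vector pairs to $\pm 1$ with $\alpha_{n-1}\pm\alpha_n$ (Lemma~\ref{lem:g-vector-of-radii}), hence the two candidate roots give pairings with $\lambda$ differing by $1$ and cannot both satisfy \eqref{eq:dual}. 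Note also that your diagnosis is slightly off: singularity of $B_c$ (e.g., odd-rank $A_n$, $n\geq 5$) does not by itself produce two positive-root solutions of \eqref{eq:system} --- the support argument excludes that --- so the genuine two-solution phenomenon, where \eqref{eq:dual} is truly needed, is confined to type $D_n$ and a few small-rank accidents.
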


Note that the shape of equation~\eqref{eq:exrel} follows immediately from the coef\/f\/icient-free case \cite[Proposition~5.2]{Ste13} together with the observations that $\bc$-vectors are roots in the root system of~$\Gamma$ (cf.~\cite{NS14}), and that the exchange relations in~$\cA_\bullet(c)$ are homogeneous. The real content of our theorem are therefore the explicit conditions~\eqref{eq:system} and~\eqref{eq:dual} that determine $\alpha$. They are clearly both necessary.

Indeed, equation~\eqref{eq:system} is just a restatement of the fact that the exchange relations in $\cA_\bullet(c)$ are homogeneous and that the degree of $y_i$ is $-\bb_i$ (the negative of the i-th column of~$B_c$). Equation~\eqref{eq:dual}, instead, follows immediately from \cite[equation~(1.11)]{NZ12} once we interpret $\bg$-vectors as weights and $\bc$-vectors as roots together with the observation that, when mutating in direction~$k$, the $k$-th $\bc$-vector changes into its negative.

On the other hand, equation~\eqref{eq:system} is not, in principle, suf\/f\/icient on its own because~$B_c$ is, in general, not invertible. Nonetheless, thanks to the fact that we are dealing with positive roots, we will see that it is still enough in every case except in type $D_n$.

\begin{Remark} \label{geom}
Equation~\eqref{eq:exrel} has the following geometric interpretation. Associating the $\bg$-vectors with weights, one can observe that every cluster corresponds to a cone with facets being mirrors of ref\/lections of the associated Weyl group (see Fig.~\ref{fig:exchange_relation}). If two clusters are neighbors in the exchange graph (i.e., they dif\/fer only by two exchangeable cluster variables $x_\lambda$ and $x_\mu$), then the corresponding cones share a facet, and this facet is precisely the mirror of the ref\/lection in the root~$\alpha$ from equation~\eqref{eq:exrel}.
 \begin{figure}[t] \centering
 \includegraphics[scale=0.4]{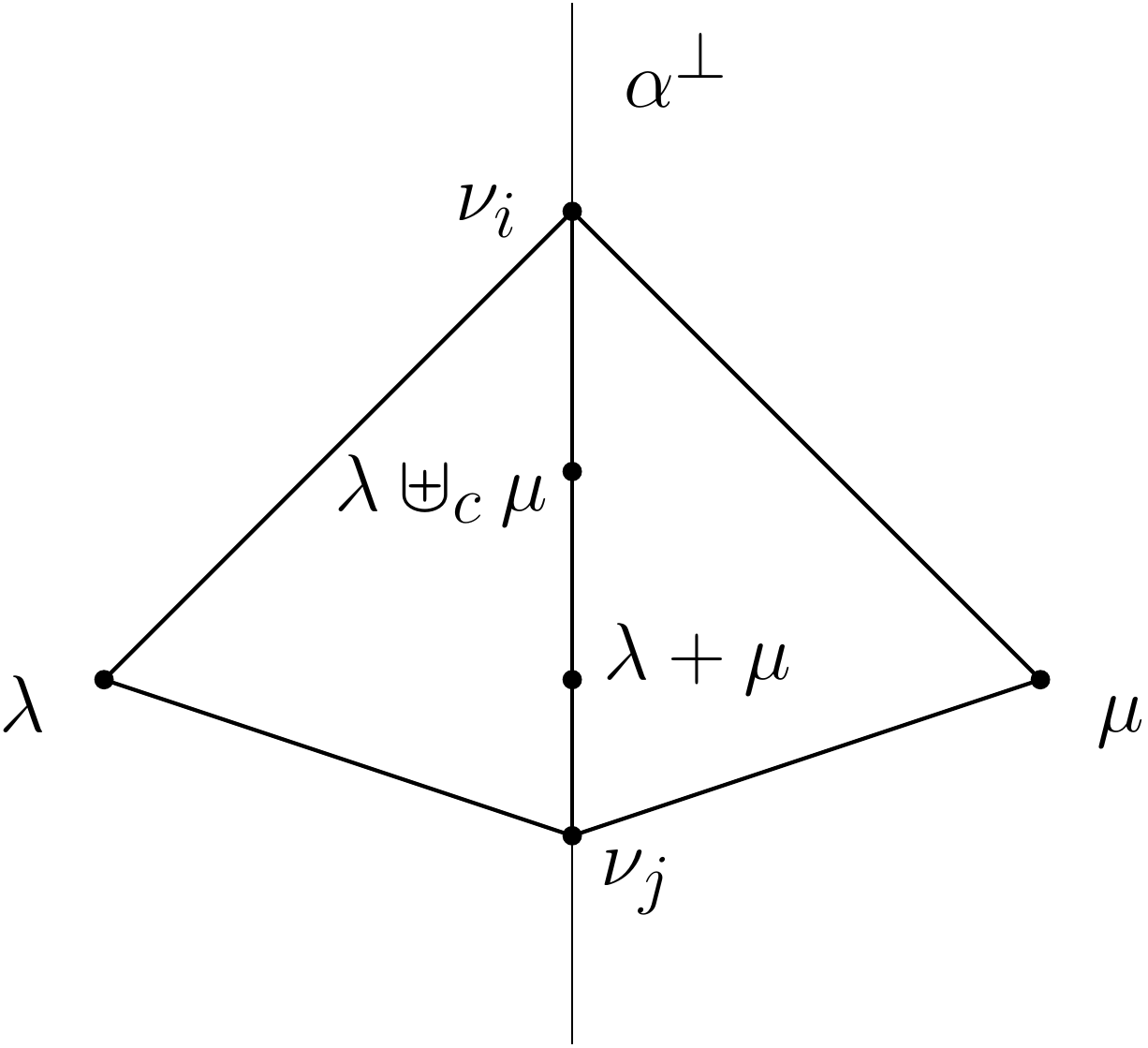}
 \caption{Geometric interpretation of exchange relations: $x_\lambda$ and $x_\mu$ are exchangeable cluster variables, $\lambda$ and $\mu$ their $\bg$-vectors, and $\alpha^\perp$ is the wall in the cluster fan separating the two clusters.} \label{fig:exchange_relation}
 \end{figure}
\end{Remark}

\section{Proof of Theorem~\ref{thm:main}}\label{proof}
Recall the notation for $\Gamma$, $c$ and $B_c$ from the previous section. Without loss of generality we consider only Dynkin diagrams that are connected. Further, we assume that the nodes of~$\Gamma$ are labeled according to the conventions in Fig.~\ref{fig:Dynkin}.

\begin{figure}[h!]
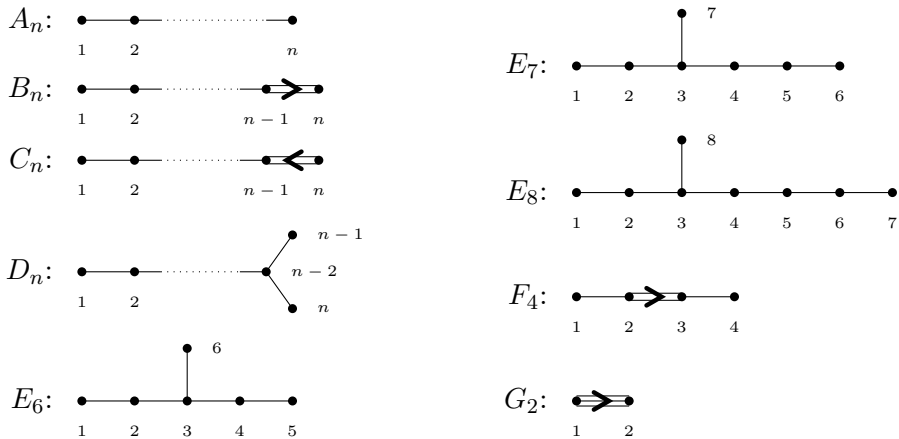
 \centering
 \begin{minipage}{0.8\textwidth}
 \begin{multicols}{2}
 \begin{itemize}\itemsep=0pt
 \item[$A_n$:]
 \begin{dynkin}
 \dynkinline{1}{0}{2.5}{0}
 \dynkinline{4.5}{0}{5}{0}
 \dynkindotline{2.5}{0}{4.5}{0}
 \dynkindot{1}{0}{1}{below}
 \dynkindot{2}{0}{2}{below}
 \dynkindot{5}{0}{\vphantom{1}n}{below}
 \end{dynkin}

 \item[$B_n$:]
 \begin{dynkin}
 \dynkinline{1}{0}{2.5}{0}
 \dynkinline{4}{0}{4.5}{0}
 \dynkindotline{2.5}{0}{4}{0}
 \dynkindoubleline{4.5}{0}{5.5}{0}
 \dynkindot{1}{0}{1}{below}
 \dynkindot{2}{0}{2}{below}
 \dynkindot{4.5}{0}{n-1}{below}
 \dynkindot{5.5}{0}{\vphantom{1}n}{below}
 \end{dynkin}

 \item[$C_n$:]
 \begin{dynkin}
 \dynkinline{1}{0}{2.5}{0}
 \dynkinline{4}{0}{4.5}{0}
 \dynkindotline{2.5}{0}{4}{0}
 \dynkindoubleline{5.5}{0}{4.5}{0}
 \dynkindot{1}{0}{1}{below}
 \dynkindot{2}{0}{2}{below}
 \dynkindot{4.5}{0}{n-1}{below}
 \dynkindot{5.5}{0}{\vphantom{1}n}{below}
 \end{dynkin}

 \item[$D_n$:]
 \begin{dynkin}
 \dynkinline{1}{0}{2.5}{0}
 \dynkinline{4}{0}{4.5}{0}
 \dynkindotline{2.5}{0}{4}{0}
 \dynkinline{5}{0.7}{4.5}{0}
 \dynkinline{5}{-0.7}{4.5}{0}
 \dynkindot{1}{0}{1}{below}
 \dynkindot{2}{0}{2}{below}
 \dynkindot{4.5}{0}{n-2}{right}
 \dynkindot{5}{0.7}{n-1}{right}
 \dynkindot{5}{-0.7}{n}{right}
 \end{dynkin}

 \item[$E_6$:]
 \begin{dynkin}
 \dynkinline{1}{0}{5}{0}
 \dynkinline{3}{0}{3}{1}
 \dynkindot{1}{0}{1}{below}
 \dynkindot{2}{0}{2}{below}
 \dynkindot{3}{0}{3}{below}
 \dynkindot{4}{0}{4}{below}
 \dynkindot{5}{0}{5}{below}
 \dynkindot{3}{1}{6}{right}
 \end{dynkin}

 \item[$E_7$:]
 \begin{dynkin}
 \dynkinline{1}{0}{6}{0}
 \dynkinline{3}{0}{3}{1}
 \dynkindot{1}{0}{1}{below}
 \dynkindot{2}{0}{2}{below}
 \dynkindot{3}{0}{3}{below}
 \dynkindot{4}{0}{4}{below}
 \dynkindot{5}{0}{5}{below}
 \dynkindot{6}{0}{6}{below}
 \dynkindot{3}{1}{7}{right}
 \end{dynkin}

 \item[$E_8$:]
 \begin{dynkin}
 \dynkinline{1}{0}{7}{0}
 \dynkinline{3}{0}{3}{1}
 \dynkindot{1}{0}{1}{below}
 \dynkindot{2}{0}{2}{below}
 \dynkindot{3}{0}{3}{below}
 \dynkindot{4}{0}{4}{below}
 \dynkindot{5}{0}{5}{below}
 \dynkindot{6}{0}{6}{below}
 \dynkindot{7}{0}{7}{below}
 \dynkindot{3}{1}{8}{right}
 \end{dynkin}
 \vfill

 \item[$F_4$:]
 \begin{dynkin}
 \dynkinline{1}{0}{2}{0}
 \dynkindoubleline{2}{0}{3}{0}
 \dynkinline{3}{0}{4}{0}
 \dynkindot{1}{0}{1}{below}
 \dynkindot{2}{0}{2}{below}
 \dynkindot{3}{0}{3}{below}
 \dynkindot{4}{0}{4}{below}
 \end{dynkin}
 \vfill

 \item[$G_2$:]
 \begin{dynkin}
 \dynkintripleline{1}{0}{2}{0}
 \dynkindot{1}{0}{1}{below}
 \dynkindot{2}{0}{2}{below}
 \end{dynkin}

 \end{itemize}
 \end{multicols}
 \end{minipage}
 \caption{Finite type Dynkin diagrams.}
 \label{fig:Dynkin}
\end{figure}

 We begin our analysis with some easy considerations on the rank of $B_c$.
 \begin{Lemma} \label{lem:dimensions} If the type of $\Gamma$ is not $D_n$, then the kernel of $B_c$ has dimension $0$ if~$n$ is even and~$1$ if $n$ is odd.
 If the type of~$\Gamma$ is~$D_n$, then the kernel of~$B_c$ has dimension~$2$ if~$n$ is even and~$1$ if~$n$ is odd.
 \end{Lemma}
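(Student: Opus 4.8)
The plan is to reduce the computation of $\dim\ker B_c$ to the multiplicity of $-1$ in the spectrum of the Coxeter element $c$, acting on the root space $V=Q\otimes\mathbb{R}$, and then read the answer off from the exponents of $W$. Fix a linear order on the nodes refining $\prec_c$, so that $c=s_1\cdots s_n$ and every entry $a_{ij}$ with $i\prec_c j$ sits strictly above the diagonal. Write the Cartan matrix as $A=U+L$, with $U$ upper unitriangular and $L$ lower unitriangular; comparing off-diagonal entries with the definition of $B_c$ gives $B_c=L-U$. The first thing I would record is the identity $c=-U^{-1}L$ for the matrix of the Coxeter element in the basis of simple roots. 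This is a direct computation from the reflection matrices $s_i=\id-e_i r_i$ (with $r_i$ the $i$-th row of $A$), and it is where the acyclicity of the orientation enters. Since $U$ is invertible, rewriting $L=-Uc$ yields $B_c=L-U=-U(c+\id)$, whence $\dim\ker B_c=\dim\ker(c+\id)$.

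Because $c$ has finite order it is semisimple, so $\dim\ker(c+\id)$ is exactly the multiplicity of the eigenvalue $-1$ of $c$. The next step invokes the classical description of the spectrum of a Coxeter element: its eigenvalues are $e^{2\pi\sqrt{-1}\,m_j/h}$, where $h$ is the Coxeter number and $m_1,\dots,m_n$ are the exponents of $W$. (All Coxeter elements are conjugate in $W$, which also explains a priori why the answer depends only on the type and not on the particular $c$.) Hence $\dim\ker B_c$ equals the number of exponents equal to $h/2$.

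To finish I would combine two standard facts about exponents with a parity constraint. First, $B_c$ is skew-symmetrizable, so its rank is even and $\dim\ker B_c\equiv n\pmod 2$. Second, the exponents of an irreducible finite type Weyl group are pairwise distinct, with the single exception of type $D_n$ with $n$ even, where the exponent $n-1$ occurs twice. For a type other than $D_n$ the multiplicity of $-1$ is therefore at most $1$, and the parity constraint forces it to be $0$ when $n$ is even and $1$ when $n$ is odd, exactly as claimed. For type $D_n$ one uses the coincidence $h/2=n-1$, so that the eigenvalue $-1$ corresponds precisely to the distinguished exponent $n-1$: it is simple when $n$ is odd (multiplicity $1$) and doubled when $n$ is even (multiplicity $2$), again consistent with the parity constraint.

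I expect the identity $c=-U^{-1}L$ to need the most care to set up cleanly, and the genuinely type-specific input to be the behaviour of the exponent $n-1$ in type $D_n$ — it is precisely the coincidence $h/2=n-1$ together with the doubling of $n-1$ for even $n$ that makes $D_n$ exceptional in the statement, and this is the only place where the eigenvalue $-1$ fails to be simple.
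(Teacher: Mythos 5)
Your argument is correct, but it takes a genuinely different route from the one in the paper. The paper's proof is elementary linear algebra throughout: it uses mutation-invariance of the rank to reduce to the single Coxeter element $c=s_1\cdots s_n$, proves invertibility for even $n$ in types $A_n$, $B_n$, $C_n$ by Laplace expansion along the first column and then the first row (yielding $\det(B_c)=\det(B_c')$ with $B_c'$ of the same family and size $n-2$, then inducting from the $2\times 2$ case), gets $\det(B_c)=0$ for odd $n$ from skew-symmetrizability, treats $D_n$ by noting that the last two rows and columns of $B_c$ coincide so that everything reduces to type $A_{n-1}$, and disposes of the exceptional types by direct inspection. You instead establish the factorization $B_c=-U(c+\id)$ via the classical identity $c=-U^{-1}L$, so that $\dim\ker B_c$ equals the multiplicity of the eigenvalue $-1$ of the Coxeter element, and then quote the classical description of the spectrum of $c$ in terms of the exponents: this multiplicity is the number of exponents equal to $h/2$. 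Distinctness of the exponents (with the single exception of the doubled exponent $n-1$ in type $D_n$ for $n$ even) together with the parity constraint $\dim\ker B_c\equiv n\pmod 2$ (the rank of a skew-symmetrizable matrix is even) then settles all types simultaneously; your use of parity is a nice touch, since it spares you from checking type by type whether $h/2$ actually occurs as an exponent. Your route buys uniformity --- no hand checks for $E_6$, $E_7$, $E_8$, $F_4$, $G_2$ --- and a conceptual explanation of why $D_n$ with even $n$ is the unique anomaly: it is the only irreducible type with a repeated exponent, and that exponent happens to equal $h/2$. It also applies directly to an arbitrary acyclic $c$, with conjugacy of Coxeter elements playing the role that mutation-invariance of the rank plays in the paper. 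The cost is the two nontrivial classical inputs ($c=-U^{-1}L$, which you rightly flag as the step needing the most care, and the spectral theorem for Coxeter elements), whereas the paper's argument is self-contained; note also that the paper's hands-on approach is not wasted effort, since the lemmas following Lemma~\ref{lem:dimensions} need explicit generators of $\ker B_c$ and their supports, which your spectral argument does not exhibit.
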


\begin{proof}The rank of $B_c$ is invariant under mutations so it suf\/f\/ices to establish the property for a single choice of~$c$. Let then $c=s_1\cdots s_n$ so that all the positive entries of $B_c$ are above the main diagonal. Exceptional types could be dealt uniformly in the argument at the expense of introducing heavier notation. We prefer to check the lemma by direct inspection in those cases.

Assume at f\/irst that the type of $\Gamma$ is not $D_n$. When $n$ is even, the matrix $B_c$ is invertible. Indeed, expanding by the f\/irst column and then by the f\/irst row, we get
 \begin{gather*}
 \det(B_c)=\det(B_c'),
 \end{gather*}
where $B_c'$ is a $(n-2)\times(n-2)$ matrix in the same inf\/inite class of $B_c$. The result follows then immediately by induction because all $2\times2$ skew-symmetrizable non-zero matrices are invertible. On the other hand, when $n$ is odd, $B_c$ being skew-symmetrizable implies immediately that $\det(B_c)=0$. Combining the two assertions we get that, for odd $n$, the dimension of the kernel of $B_c$ is~$1$.

To get the result in type $D_n$ it is enough to observe that the last two rows (and columns) of~$B_c$ are identical. We deduce therefore the required property from type $A_{n-1}$.
\end{proof}

This establishes Theorem~\ref{thm:main} whenever $n$ is even and the type of $\Gamma$ is not~$D_n$. In particular the result holds for all the exceptional types apart from type~$E_7$; in order to simplify the remaining analysis we check this case by hand. For each possible~$c$, the computations required amount to show that, whenever two positive roots satisfy equation~\eqref{eq:system}, only one of them satisf\/ies also equation~\eqref{eq:dual}; we omit the straightforward but lengthy calculations.

Alternatively, one could use the following observation to obtain type $E_7$ from type $E_8$.

\begin{Remark} \label{rk:induction}
A careful reader may observe that Lemma~\ref{lem:dimensions} could be used to establish Theo\-rem~\ref{thm:main} directly in all f\/inite types with the exception of~$D_n$. Indeed it would be enough to extend any $(2k+1) \times (2k+1)$ exchange matrix to a $(2k+2) \times (2k+2)$ exchange matrix of the same type and deduce the required property from the resulting algebra embedding. Instead, we prefer to give a more explicit argument that will simplify the analysis in type $D_n$ as well.
\end{Remark}

From now on we assume that $\Gamma$ is not of exceptional type. To deal with the remaining inf\/inite families we compute explicit generators for the kernel of~$B_c$. Our argument will hinge upon an explicit description of the possible dif\/ferences of positive roots; unfortunately in small rank non-generic situations may arise. We therefore verify Theorem~\ref{thm:main} by direct inspection in types~$A_3$, $B_3$, $C_3$, $D_4$ and~$D_6$; the calculations required are similar to those for type~$E_7$ and again we omit the details here.

\begin{Definition}
The \emph{support} of a vector $v$ is the full subdiagram of $\Gamma$ induced by the nodes corresponding to the non-zero coordinates of~$v$ when written in the basis of simple roots.
\end{Definition}

 \begin{Lemma}
Let $\Gamma$ be of type $A_n$, $B_n$, or $C_n$ with $n=2k+1$. Then the support of the vector spanning the kernel of~$B_c$ has exactly $k+1$ connected components.
 \end{Lemma}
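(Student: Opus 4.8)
The plan is to compute the kernel of $B_c$ explicitly and read off its support directly. By Lemma~\ref{lem:dimensions} this kernel is one-dimensional, so it suffices to exhibit a single nonzero vector $v=\sum_i v_i\alpha_i$ annihilated by $B_c$ and to determine which coordinates $v_i$ vanish. Since the type is $A_n$, $B_n$ or $C_n$, the diagram $\Gamma$ is a path on the vertices $1,\dots,n$ in the labelling of Fig.~\ref{fig:Dynkin}, so $b_{ij}\ne 0$ precisely when $|i-j|=1$. Consequently the system $B_cv=0$ reduces, at each interior vertex $i$, to the two-term relation $b_{i,i-1}v_{i-1}+b_{i,i+1}v_{i+1}=0$, while the two endpoints contribute $b_{1,2}v_2=0$ and $b_{n,n-1}v_{n-1}=0$.

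First I would exploit the parity decoupling built into these relations: the equation attached to vertex $i$ only involves coordinates of parity opposite to that of $i$. I would use the equations indexed by the \emph{odd} vertices to control the even-indexed coordinates. The endpoint $1$ forces $v_2=0$, and feeding this into the relations at the interior odd vertices $3,5,\dots$ propagates $v_4=v_6=\cdots=0$; since $n=2k+1$ is odd, the remaining endpoint relation $b_{n,n-1}v_{n-1}=0$ falls among these already-vanishing even coordinates and is automatically satisfied. Hence every even-indexed coordinate vanishes.

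Next I would run the equations indexed by the \emph{even} vertices to pin down the odd coordinates. Vertex $2$ gives $v_3=-(b_{2,1}/b_{2,3})v_1$, vertex $4$ gives $v_5=-(b_{4,3}/b_{4,5})v_3$, and so on, with the last even vertex $n-1$ determining $v_n$ from $v_{n-2}$. Each ratio is a quotient of nonzero integers, so once $v_1$ is chosen nonzero all of $v_1,v_3,\dots,v_n$ are nonzero. This simultaneously reconfirms that the solution space is one-dimensional and shows that $v$ is supported exactly on the odd vertices $\{1,3,\dots,n\}$. As these $k+1$ vertices are pairwise non-adjacent in the path $\Gamma$, the subdiagram they induce is a disjoint union of $k+1$ isolated nodes, which is the claim.

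The argument is uniform in $c$ and across the three families precisely because it never uses the actual values of the entries $b_{ij}$, only that adjacent ones are nonzero: changing the orientation merely flips the signs of the $b_{i,i\pm1}$, and passing between $B_n$ and $C_n$ only replaces a $1$ by a $2$ in the last relation, neither of which can create a cancellation killing an odd coordinate. The one point that genuinely needs care---and the closest thing to an obstacle---is checking that the endpoint equations behave correctly for odd $n$: it is the oddness of $n$ that makes vertex $n$ odd, so that its relation lands among the vanishing even coordinates instead of imposing a fatal constraint on the odd chain. For even $n$ the same bookkeeping would instead force $v=0$, consistent with the trivial kernel recorded in Lemma~\ref{lem:dimensions}.
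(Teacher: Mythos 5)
Your proposal is correct and follows essentially the same route as the paper: both arguments exploit the fact that $B_c$ is tridiagonal with nonzero entries exactly on the two off-diagonals, so that the parity decoupling forces any kernel vector to vanish on the even-indexed coordinates and to be nonzero on all the odd-indexed ones, giving $k+1$ isolated nodes as support. The only difference is presentational: the paper writes down the spanning vector in closed form (with signs $\varepsilon_i$ and denominators $a_{i-1,i}$), whereas you solve the two-term recursions step by step, which amounts to the same computation.
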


 \begin{proof}
By Lemma~\ref{lem:dimensions} there is a unique (up to a scalar) non-zero vector $v$ such that $B_cv=0$. Since the only non-zero entries in~$B_c$ are located in the two diagonals adjacent to the main diagonal (cf.\ Fig.~\ref{fig:Dynkin}), $v$ is a linear combination of the~$\alpha_i$ with odd~$i$. Moreover, since $\Gamma$ is connected, all the entries of these two diagonals are non-zero so that all such~$\alpha_i$ appear with non-zero coef\/f\/icient and the claim follows.

More explicitly, for $i>2$ set
 \begin{gather*}
 \varepsilon_i := \begin{cases}
 1 & \text{if $i-2\prec_c i-1 \prec_c i$ or $i\prec_c i-1 \prec_c i-2$},\\
 -1 & \text{otherwise.}
 \end{cases}
 \end{gather*}
It is straightforward to verify that the kernel of $B_c$ is spanned by the vector $v$ def\/ined by
 \begin{gather} \label{eq:vector}
 v := \alpha_1 + \sum_{\substack{i\ \mathrm{odd}\\ 3\le i \leq n}} \frac{\varepsilon_i}{a_{i-1,i}} \alpha_i,
 \end{gather}
which proves the lemma.
 \end{proof}

 \begin{Lemma} \label{lem:ker_Dn_even}
 Let $\Gamma$ be of type $D_n$ with $n$ odd. Then the kernel of $B_c$ is generated by $\alpha_{n-1}+\alpha_n$ if $(n-1) \prec_c (n-2) \prec_c n$ or $n \prec_c (n-2) \prec_c (n-1)$. Otherwise it is generated by $\alpha_{n-1}-\alpha_n$.
 \end{Lemma}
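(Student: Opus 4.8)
The plan is to lean on the dimension count of Lemma~\ref{lem:dimensions}: for $D_n$ with $n$ odd the kernel of $B_c$ is one-dimensional, so it suffices to exhibit a single nonzero vector annihilated by $B_c$ and to match it against the two claimed generators. The guiding observation is structural: in type $D_n$ the fork nodes $n-1$ and $n$ are each adjacent only to $n-2$ and not to one another. I therefore expect the kernel to be supported entirely on $\{n-1,n\}$, which collapses the whole computation to a single linear equation. Since $D_n$ is simply laced, $B_c$ is skew-symmetric and all the Cartan entries in play equal $-1$, so none of the fractional factors $1/a_{i-1,i}$ from the previous lemma intervene.

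First I would record the relevant entries of $B_c$. From the definition of $B_c$ and $a_{n-1,n-2}=a_{n,n-2}=-1$, one has $b_{n-1,n-2}=1$ exactly when $(n-1)\prec_c(n-2)$ and $b_{n-1,n-2}=-1$ otherwise, and likewise $b_{n,n-2}=1$ exactly when $n\prec_c(n-2)$; the only other nonzero entries in these rows and columns come from the chain $1-2-\cdots-(n-2)$. Next I would test the ansatz $v=\alpha_{n-1}+\varepsilon\alpha_n$ with $\varepsilon=\pm1$, so that $v_i=0$ for all $i\le n-2$. The rows of $B_c v$ indexed $n-1$ and $n$ read $b_{n-1,n-2}v_{n-2}=0$ and $b_{n,n-2}v_{n-2}=0$, hence vanish; in fact these rows force $v_{n-2}=0$ for any kernel vector, which is what motivates the support. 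The rows indexed $1,\dots,n-3$ involve only columns $\le n-2$, where $v$ is zero, and so vanish as well. The single surviving equation is the row indexed $n-2$, namely $b_{n-2,n-1}+\varepsilon\,b_{n-2,n}=0$, which by skew-symmetry is $b_{n-1,n-2}+\varepsilon\,b_{n,n-2}=0$.

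Finally I would read off $\varepsilon=-b_{n-1,n-2}/b_{n,n-2}$ and translate the sign into the ordering condition. The two entries $b_{n-1,n-2}$ and $b_{n,n-2}$ have opposite signs precisely when $n-1$ and $n$ lie on opposite sides of $n-2$ in $c$, i.e.\ when $(n-1)\prec_c(n-2)\prec_c n$ or $n\prec_c(n-2)\prec_c(n-1)$; this gives $\varepsilon=1$ and the generator $\alpha_{n-1}+\alpha_n$. In the complementary case $n-1$ and $n$ sit on the same side of $n-2$, the two entries share a sign, forcing $\varepsilon=-1$ and the generator $\alpha_{n-1}-\alpha_n$. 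As the kernel is one-dimensional, the exhibited vector spans it. I do not anticipate a genuine obstacle: the content is a one-line verification, and the only place demanding care is the sign bookkeeping that converts $b_{n-1,n-2}=-\varepsilon\,b_{n,n-2}$ into the stated statements about $\prec_c$. The very small odd ranks where support-based reasoning might conceivably degenerate are, as noted in the surrounding text, already covered by the separate direct-inspection checks.
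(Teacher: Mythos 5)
Your proposal is correct and follows essentially the same route as the paper: both invoke Lemma~\ref{lem:dimensions} for one-dimensionality of the kernel and then identify which of $\alpha_{n-1}\pm\alpha_n$ is annihilated by $B_c$ according to whether $b_{n-2,n-1}$ and $b_{n-2,n}$ agree or differ in sign (the paper phrases this as the last two columns of $B_c$ being identical or opposite, which is exactly your single surviving row equation). Your explicit sign bookkeeping translating this into the $\prec_c$ conditions is just a spelled-out version of what the paper leaves implicit.
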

\begin{proof}
Again by Lemma~\ref{lem:dimensions} there is a unique (up to a scalar) non-zero vector $v$ such that $B_cv=0$. The last two columns of $B_c$ are either identical (in which case $v=\alpha_{n-1}-\alpha_n$), or dif\/fer only in sign so that $v=\alpha_{n-1}+\alpha_n$.
\end{proof}

\begin{Lemma} Let $\Gamma$ be of type $D_n$ with $n=2k$ and $n\geq 4$. Then the kernel of~$B_c$ is generated by a vector whose support has exactly~$k$ connected components together with one of the two vectors~$\alpha_{n-1}\pm\alpha_n$ according to the same prescriptions of Lemma~{\rm \ref{lem:ker_Dn_even}}.
 \end{Lemma}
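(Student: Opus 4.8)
The plan is to exhibit the two kernel generators explicitly, reducing the whole computation to the combinatorics of the chain $1,\dots,n-2$ together with the fork attached at the node $n-2$. First I would invoke Lemma~\ref{lem:dimensions} to record that $\ker B_c$ is two-dimensional, so that it suffices to produce two independent kernel vectors of the asserted shape. I would also note at the outset that the positions of the non-zero entries of $B_c$ depend only on $\Gamma$ and not on the chosen Coxeter element $c$; only their signs vary, and the recursion coefficients they produce are always $\pm1$. This lets me carry out the argument for an arbitrary $c$ rather than for a single convenient one.

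The first generator is the fork vector, obtained exactly as in the proof of Lemma~\ref{lem:ker_Dn_even}. Since the nodes $n-1$ and $n$ are each adjacent only to $n-2$, the $(n-1)$-th and $n$-th columns of $B_c$ are $b_{n-2,n-1}$ and $b_{n-2,n}$ times the vector supported solely at $n-2$, with both coefficients in $\{\pm1\}$. These two columns coincide when $n-2$ lies on the same side of $n-1$ and of $n$ in $c$ and are opposite otherwise; hence $\alpha_{n-1}-\alpha_n$ or $\alpha_{n-1}+\alpha_n$ lies in $\ker B_c$ according to the very prescription of Lemma~\ref{lem:ker_Dn_even}. This produces one of the two claimed generators.

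For the second generator I would solve $B_cv=0$ along the chain. The equation at the endpoint node~$1$ forces $v_2=0$, and the equation at each interior chain node $i$ with $2\le i\le n-3$ reads $b_{i,i-1}v_{i-1}+b_{i,i+1}v_{i+1}=0$ with both coefficients non-zero, so it propagates $v_{i+1}=\pm v_{i-1}$. Running this recursion yields $v_2=v_4=\dots=v_{n-2}=0$, while the odd-indexed coordinates $v_1,v_3,\dots,v_{n-3}$ are all non-zero multiples of $v_1$; here I use that $n=2k$, so $n-2$ is even and $n-3$ is odd. The one remaining equation is the fork equation at $n-2$, namely $b_{n-2,n-3}v_{n-3}+b_{n-2,n-1}v_{n-1}+b_{n-2,n}v_n=0$, and since $v_{n-3}\neq0$ whenever $v_1\neq0$, and since the fork vector has non-zero $n$-coordinate, I may add a suitable multiple of it to set $v_n=0$ and solve for a non-zero $v_{n-1}$. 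The resulting vector has support exactly $\{1,3,\dots,n-3,n-1\}$, a set of $k$ nodes. These nodes are pairwise non-adjacent in $\Gamma$, since consecutive odd chain nodes are separated by an even node and $n-1$ meets only $n-2$; hence its support has exactly $k$ connected components. As this vector has $v_1\neq0$ it is independent of the fork vector, and the two together span the two-dimensional kernel.

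The only genuinely delicate point is the bookkeeping at the fork. I must confirm both that the correct sign in $\alpha_{n-1}\pm\alpha_n$ is the one forced by the position of $n-2$ in $c$, matching Lemma~\ref{lem:ker_Dn_even}, and that the freedom of adding multiples of this fork vector really does let me choose the second generator with a \emph{single} non-zero fork coordinate, so that the component count comes out to be exactly $k$ and not $k+1$. Every other step is the same parity computation already carried out for type $A_n$.
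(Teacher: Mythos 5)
Your proof is correct and takes essentially the same route as the paper: both arguments exhibit the fork vector $\alpha_{n-1}\pm\alpha_n$ via the column comparison underlying Lemma~\ref{lem:ker_Dn_even}, exhibit a second kernel vector supported on the odd nodes $\{1,3,\dots,n-3,n-1\}$ (hence exactly $k$ isolated components), and conclude from the two-dimensionality of the kernel given by Lemma~\ref{lem:dimensions}. The only difference is presentational: the paper obtains the second generator by citing the type $A_{n-1}$ vector~\eqref{eq:vector}, which lies in the kernel because it only interacts with an $A_{n-1}$ sub-matrix of $B_c$, whereas you re-derive that same vector by solving the chain recursion and normalizing the fork coordinates.
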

 \begin{proof}
The result follows directly by combining the previous two lemmas. Indeed the vector~\eqref{eq:vector} is killed by $B_c$ because it only interacts with a sub-matrix of type $A_{n-1}$ while the same reasoning of Lemma~\ref{lem:ker_Dn_even} applies to one of the two $\alpha_{n-1}\pm\alpha_n$. The two killed vectors are manifestly linearly independent.
 \end{proof}

 To use these information we need the following easy observation obtained by inspection of the appropriate list of roots.
 \begin{Lemma} \label{lem:components} If $\Gamma$ is of type $A_n$, $B_n$, or~$C_n$, then the support of the difference of any two positive roots in the root system of $\Gamma$ has at most two connected components. If $\Gamma$ is of type~$D_n$, then the support of the difference of any two positive roots in the root system of $\Gamma$ has at most three connected components.
 \end{Lemma}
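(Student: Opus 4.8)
The plan is to pass to the standard Euclidean model of each root system and read off, for every positive root, its coordinates in the basis of simple roots. In type $A_n$ one has $\alpha_i=e_i-e_{i+1}$ and the positive roots are the $e_i-e_j=\alpha_i+\cdots+\alpha_{j-1}$, whose coordinate vectors are indicator functions of the intervals $[i,j-1]$. In types $B_n$ (where $\alpha_n=e_n$) and $C_n$ (where $\alpha_n=2e_n$) the analogous computation shows that every positive root is again supported on a single interval $[p,q]$ with coordinates in $\{1,2\}$, arranged monotonically as $1\cdots 1$, $1\cdots 1\,2\cdots 2$, or $2\cdots 2$, with a single possible extra descent confined to the terminal node $n$ in type $C_n$; in particular a block of $2$'s is always forced to the end of the diagram. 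In type $D_n$ (where $\alpha_{n-1}=e_{n-1}-e_n$ and $\alpha_n=e_{n-1}+e_n$) the roots $e_i\pm e_j$ have a ``trunk'' part on $[1,n-2]$ of the same shape, together with a pair of ``prong'' coordinates on $\alpha_{n-1}$ and $\alpha_n$, each lying in $\{0,1\}$.

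First I would reformulate the statement combinatorially. Writing $\delta:=\beta-\gamma$ for the coordinate vector of the difference of two positive roots, the number of connected components of $\Supp(\delta)$ is the number of maximal runs of nonzero entries of $\delta$. The sequence $\delta$ is piecewise constant, and it can change value only at a node where $\beta$ or $\gamma$ changes value; by the tabulation above each positive root changes value at only a bounded number of nodes. Counting these breakpoints bounds the number of constant pieces of $\delta$, and hence the number of nonzero runs.

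For types $A_n$, $B_n$, $C_n$ I would carry this count out carefully. The crucial refinements, which prevent the naive breakpoint count from overshooting, are that a root which is already nonzero at the first node $1$ has strictly fewer interior breakpoints, and that the only place where a profile fails to be monotone is the terminal node $n$. A short case analysis, split according to whether $\delta_1=0$ and according to the common behaviour of $\beta$ and $\gamma$ at the node $n$ (where their descents partially cancel in the difference), then pins the number of nonzero runs down to at most two.

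In type $D_n$ the same count applied to the trunk $[1,n-2]$ again yields at most two runs there. The new phenomenon is the branch node: since $\alpha_{n-1}$ and $\alpha_n$ are both adjacent to $\alpha_{n-2}$ but not to one another, whenever $\delta$ vanishes on $\alpha_{n-2}$ while being nonzero on both $\alpha_{n-1}$ and $\alpha_n$ the two prongs split off as separate components, contributing one extra component beyond the trunk. This yields the bound of three, which is attained: for instance $(e_1+e_n)-(e_2-e_n)=\alpha_1-\alpha_{n-1}+\alpha_n$ has support $\{1,n-1,n\}$ with three components. The step I expect to be the main obstacle is precisely this interplay at the special nodes: in types $B_n$, $C_n$ one must verify that the terminal-node descent never manufactures a third run, and in type $D_n$ that the trunk cannot simultaneously carry two runs and have the fork split off two further prong components. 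The degenerate low-rank cases $A_3$, $B_3$, $C_3$, $D_4$, $D_6$, where these interval arguments break down, have already been set aside for direct verification, so the generic analysis above completes the proof.
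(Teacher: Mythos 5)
Your overall strategy coincides with the paper's own: write the positive roots of each classical type explicitly in the simple-root basis and control the support of a difference combinatorially. (The paper's proof is in fact terser than your sketch: it treats only type $A_n$, where roots are indicator vectors of intervals so the support of a difference is a symmetric difference of two intervals, and dismisses the other types as ``similar considerations''.) Your type $A_n$ argument is complete, your tabulation of root profiles in types $B_n$, $C_n$, $D_n$ is accurate, and your example $(e_1+e_n)-(e_2-e_n)=\alpha_1-\alpha_{n-1}+\alpha_n$ correctly shows that three components occur in type $D_n$. The problem is that the proposal stops exactly where the lemma stops being the type $A_n$ statement. In types $B_n$/$C_n$ you assert that ``a short case analysis\dots pins the number of nonzero runs down to at most two'' without performing it, and the refinement you name (fewer breakpoints for roots supported at node $1$) is not the operative one. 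More seriously, in type $D_n$ the argument you actually give allows $2+2=4$ components (two trunk runs plus two detached prongs), and you explicitly defer the exclusion of that configuration as ``the main obstacle''. That exclusion is the real content of the lemma in type $D_n$ --- notably, the authors thank their referees for catching a flaw in an earlier version of precisely this lemma --- so this is a genuine gap, not a presentational shortcut.

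Both missing steps do close, along lines compatible with your setup. Type $D_n$: suppose $\delta=\beta-\gamma$ vanishes at node $n-2$ and is nonzero at both $n-1$ and $n$, so the prong pairs of $\beta$ and $\gamma$ differ in both coordinates. A root with coefficient $2$ at node $n-2$ has prong pair $(1,1)$, forcing the other root's prong pair to be $(0,0)$ and hence its coefficient at $n-2$ to be at most $1$, contradicting $\delta_{n-2}=0$; so neither root has a $2$ there. Inspecting the root list, each root in play then has trunk equal to an interval of $1$'s ending at node $n-2$ or to the empty set, and $\delta_{n-2}=0$ forces the two trunks to agree at node $n-2$; their difference is therefore $\pm$ the indicator of a single interval, giving at most $1+2=3$ components, never four. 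Type $C_n$ (type $B_n$ needs no refinement at all, since every root there has at most two breakpoints, so the count gives at most two components outright): the dangerous case is when exactly one of the two roots carries a block of $2$'s, say on $[b+1,n-1]$. Since the other root never exceeds $1$, $\delta$ is nonzero on the whole terminal segment $[b+1,n]$, and the rest of the support is a symmetric difference of two intervals inside $[1,b]$; such a symmetric difference has a second connected component only when one of its components ends at $b$, and that component then merges with $[b+1,n]$, leaving at most two components in total. Writing out these two short arguments would turn your plan into a proof.
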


 \begin{proof} We discuss type~$A_n$, the remaining types are obtained by similar considerations. In this case positive roots correspond to connected full subdiagrams of the associated Dynkin diagram. The support of the dif\/ference of two such roots $\alpha$ and $\beta$ is thus given by the symmetric dif\/ference of the support of $\alpha$ and the support of~$\beta$.
 \end{proof}

 \begin{Corollary} If $\Gamma$ is of type $A_n$, $B_n$, or $C_n$ with $n=2k+1\geq 5$, equation~\eqref{eq:system} has a unique solution among the positive roots of~$\Gamma$.
 \end{Corollary}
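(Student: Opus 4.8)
The plan is to reduce the statement to a uniqueness assertion and then settle it by a support-counting argument. Existence of a positive-root solution is not the issue here: the genuine $\bc$-vector $\alpha$ attached to the exchange of $x_\lambda$ and $x_\mu$ is a positive root satisfying \eqref{eq:system}, so at least one solution among the positive roots is always present. What the corollary really asserts is that there cannot be a second one, and this is exactly what makes condition \eqref{eq:dual} redundant in these types.

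So suppose $\alpha$ and $\alpha'$ are two positive roots both solving \eqref{eq:system}. Subtracting the two instances of the equation kills the right-hand side and gives $B_c(\alpha-\alpha')=0$, so that $\alpha-\alpha'$ lies in $\ker B_c$. Since $n=2k+1$ is odd and $\Gamma$ is of type $A_n$, $B_n$, or $C_n$, Lemma~\ref{lem:dimensions} guarantees this kernel is one-dimensional, spanned by the explicit vector $v$ of~\eqref{eq:vector}. Hence $\alpha-\alpha'=tv$ for some scalar $t$, and the problem becomes showing $t=0$.

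The decisive step is a comparison of supports. All coordinates of $v$ in~\eqref{eq:vector} are nonzero and indexed by the odd nodes $1,3,\dots,n$, which are pairwise non-adjacent in the path-shaped diagram of $\Gamma$; thus $\Supp(v)$ consists of exactly $k+1$ isolated nodes, and $k+1\geq 3$ because $n\geq 5$. On the other hand $\alpha-\alpha'$ is a difference of two positive roots, so by Lemma~\ref{lem:components} its support has at most two connected components. If $t\neq 0$, then $\Supp(\alpha-\alpha')=\Supp(v)$ would have at least three connected components, a contradiction; therefore $t=0$ and $\alpha=\alpha'$. The whole argument hinges on the mismatch between the number of components forced by the kernel (at least three) and the number allowed for a difference of positive roots (at most two). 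The only delicate point is thus the bound $k+1\geq 3$: this is precisely where the hypothesis $n\geq 5$ is indispensable, and its failure for $n=3$ (where $k+1=2$) is exactly the reason the cases $A_3$, $B_3$, and $C_3$ must be verified separately.
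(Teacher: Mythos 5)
Your proof is correct and follows essentially the same route as the paper: the difference of two solutions lies in $\ker B_c$, which by the preceding lemmas is spanned by the vector~\eqref{eq:vector} whose support has $k+1\geq 3$ connected components, contradicting the two-component bound of Lemma~\ref{lem:components}. You have merely made explicit the steps the paper's one-line proof leaves implicit (including the role of the hypothesis $n\geq 5$), so there is nothing to add.
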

 \begin{proof}
 Indeed the dif\/ference of any two solutions is in the kernel of $B_c$ and this is generated by a vector with at least $k+1\geq3$ connected components.
 \end{proof}

 This concludes the proof of Theorem~\ref{thm:main} for types $A_n$, $B_n$ and $C_n$.

 \begin{Corollary} \label{cor:kernel-Dn} Suppose the type of $\Gamma$ is $D_n$ and $n\geq 7$. If equation~\eqref{eq:system} has more than one solution among the positive roots of $\Gamma$ then it has precisely two. Their difference is in the span of either one of $\alpha_{n-1}\pm\alpha_n$ depending on the relative order in which $s_{n-2}$, $s_{n-1}$, and~$s_n$ appear in~$c$.
 \end{Corollary}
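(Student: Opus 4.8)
The plan is to run the same kernel-plus-components strategy as in the preceding corollary. If $\alpha$ and $\alpha'$ are two positive roots both solving~\eqref{eq:system}, then $B_c(\alpha-\alpha')=0$, so every pairwise difference of solutions lies in $\ker B_c$; the whole statement reduces to understanding which elements of $\ker B_c$ can occur as a difference of two positive roots of $D_n$.

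First I would dispose of the case $n$ odd. Here Lemma~\ref{lem:dimensions} gives $\dim\ker B_c=1$, and Lemma~\ref{lem:ker_Dn_even} identifies this line as the span of $\alpha_{n-1}+\alpha_n$ or of $\alpha_{n-1}-\alpha_n$ according to the relative order of $s_{n-2}$, $s_{n-1}$, $s_n$ in $c$. Thus $\alpha-\alpha'$ is automatically a multiple of the prescribed vector $\alpha_{n-1}\pm\alpha_n$, exactly the claimed span.

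The substantive case is $n=2k$ even, where $\ker B_c$ is two-dimensional, spanned by the vector $v$ of~\eqref{eq:vector} --- whose support is the set of isolated odd nodes $1,3,\dots,n-1$, giving $k$ connected components --- together with one specified $w:=\alpha_{n-1}\pm\alpha_n$. Writing $\alpha-\alpha'=av+bw$, I would show $a=0$ by counting components. Since $w$ involves only $\alpha_{n-1}$ and $\alpha_n$, the coefficients of $\alpha_1,\alpha_3,\dots,\alpha_{n-3}$ in $av+bw$ coincide with those of $av$ and are therefore all nonzero as soon as $a\neq0$; these are $k-1$ pairwise non-adjacent nodes. If $b=0$ then $\alpha_{n-1}$ is in the support as well, for $k$ isolated components; if $b\neq0$ then instead $\alpha_n$ enters the support as a further isolated node, again producing at least $k$ components. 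Since $n\geq7$ forces $k\geq4$, either alternative gives strictly more than the three components allowed by Lemma~\ref{lem:components}, a contradiction. Hence $a=0$, so $\alpha-\alpha'$ lies in the span of $w=\alpha_{n-1}\pm\alpha_n$ with the sign fixed, as before, by the order of $s_{n-2},s_{n-1},s_n$ in $c$.

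It remains to bound the number of solutions. Having shown that every pairwise difference is a multiple of $w$, I would use that the coefficients of $\alpha_{n-1}$ and $\alpha_n$ in any positive root of $D_n$ lie in $\{0,1\}$, so such a multiple can only be $0$ or $\pm w$. If three distinct solutions existed, two of their differences would be $w$ and $-w$, and the difference of these two, namely $2w$, would be a difference of positive roots with $\alpha_n$-coefficient $\pm2$ --- impossible. Therefore there are at most two solutions. I expect the component count in the even case to be the only delicate point: one must make sure that the isolated node contributed by $\alpha_{n-1}$ or $\alpha_n$ cannot be absorbed so as to keep the total at three, which is precisely where the hypothesis $n\geq7$ enters.
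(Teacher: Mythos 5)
Your proof is correct and takes essentially the same approach as the paper: pairwise differences of solutions lie in $\ker B_c$, the component bound of Lemma~\ref{lem:components} excludes any contribution from the vector~\eqref{eq:vector}, and the fact that the $\alpha_{n-1}$- and $\alpha_n$-coefficients of positive roots of $D_n$ lie in $\{0,1\}$ caps the number of solutions at two. You even spell out two points the paper's terse proof leaves implicit --- handling arbitrary combinations $av+bw$ rather than just the generator $v$, and justifying the claim that roots with the prescribed difference ``come in pairs'' --- so there are no gaps.
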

 \begin{proof}
 The only possibility for two distinct roots to be solutions of equation~\eqref{eq:system} is for their dif\/ference to be in the span of $\alpha_{n-1}\pm\alpha_n$ (the other generating vector of the kernel of~$B_c$, when it exists, has too many connected components by the assumption on~$n$). We can conclude then by observing that positive roots in type~$D_n$ with such prescribed dif\/ference come in pairs.
 \end{proof}

To conclude the proof of Theorem~\ref{thm:main} it suf\/f\/ices to show that, in type $D_n$, whenever equation~\eqref{eq:system} is satisf\/ied by two roots only one of them verif\/ies equation~\eqref{eq:dual} as well. We will do so using the realization of cluster algebras via triangulations and laminations on surfaces introduced in~\cite{FST08,FT12} (see also~\cite{Sch08} for a detailed description of the model for~$D_n$ in the coef\/f\/icient-free case). The reader not familiar with the relevant terminology can f\/ind a simplif\/ied summary (suf\/f\/icient for the case at hand) in the beginning of \cite[Section~4.1]{NS14}.

Any cluster algebra of type $D_n$ can be realized as a triangulated once-punctured disk. Since we are only considering acyclic initial seeds, the collection of elementary laminations encoding the initial triangulation will contain a digon with one side on the boundary of the disk (cf.\ Fig.~\ref{fig:D_n-roots}). By ref\/lecting our surface, if necessary, we can always assume that $n-2 \prec_c n-1$; it will therefore suf\/f\/ice to consider only two cases: either $n-2 \prec_c n$, or $n \prec_c n-2$. Moreover we can always change simultaneously all the taggings and spiralling directions at the puncture to simplify our pictures.

 \begin{figure}[t]\centering
 \includegraphics[scale=0.6]{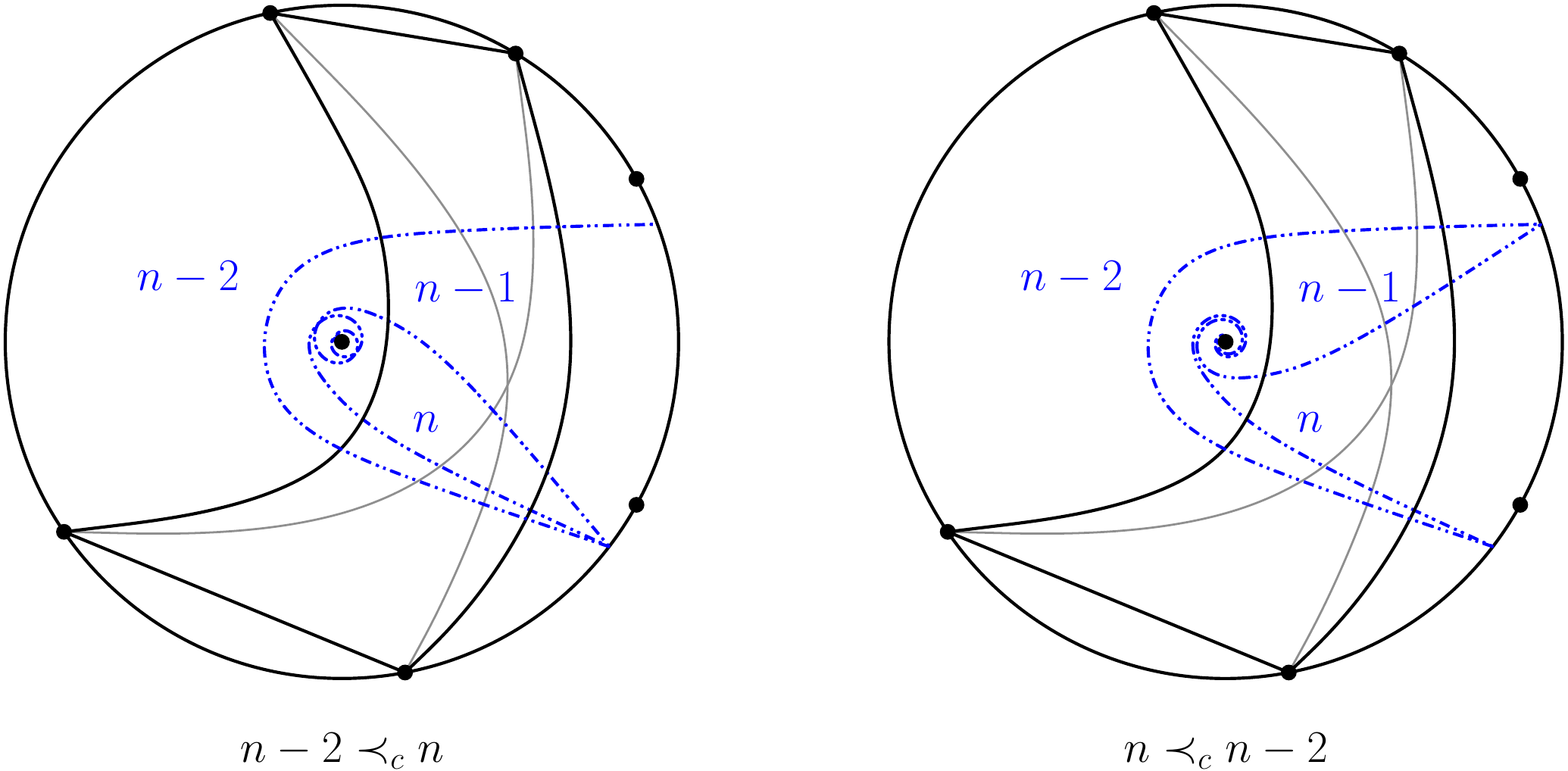}
 \caption{Quadrilaterals in type $D_n$ with both diagonals being chords yield unique solutions to equation~\eqref{eq:system}.}
 \label{fig:D_n-roots}
 \end{figure}

 \begin{Lemma} \label{lem:is-radius}
 In all the cases in which equation~\eqref{eq:system} is satisfied by two distinct positive roots at least one of~$x_\lambda$ and~$x_\mu$ corresponds to a~radius.
 \end{Lemma}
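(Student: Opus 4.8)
The plan is to prove the contrapositive, which is exactly the assertion depicted in Figure~\ref{fig:D_n-roots}: if \emph{neither} $x_\lambda$ nor $x_\mu$ is a radius---that is, both of the corresponding tagged arcs are chords joining two boundary marked points---then equation~\eqref{eq:system} admits a unique positive root solution. First I would pass to the surface model of~\cite{FST08,FT12,Sch08}, realizing $\cA_\bullet(c)$ as a triangulated once-punctured disk with the initial elementary laminations arranged so that one of them forms a digon with a side on the boundary (this is where acyclicity of $B_c$ enters). Using a reflection of the surface I would normalize to $n-2\prec_c n-1$ and treat separately the two cases $n-2\prec_c n$ and $n\prec_c n-2$, exactly as set up before the statement, changing tags and spiralling directions at the puncture simultaneously to keep the pictures manageable.

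The conceptual heart of the argument is to match, on the surface side, the algebraic ambiguity recorded in Corollary~\ref{cor:kernel-Dn}. There, two distinct solutions of~\eqref{eq:system} differ by one of the kernel generators $\alpha_{n-1}\pm\alpha_n$, and these two simple roots are precisely the ones attached to the puncture. Since the root $\alpha$ in~\eqref{eq:exrel} is the $\bc$-vector of the flip---equivalently, the vector of shear coordinates of the elementary laminations against the arc entering the exchange relation---its $(n-1)$-st and $n$-th entries are the shear coordinates taken with respect to the two laminations spiralling into the puncture. Thus a genuine ambiguity in $\alpha$ can arise only if the flipped configuration lets these two coordinates be altered by $\alpha_{n-1}\pm\alpha_n$ without changing $-B_c\alpha$, which geometrically forces the arc to interact with the puncture.

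With this dictionary in hand, the remaining step is to verify that a quadrilateral whose two diagonals are both chords cannot produce such an interaction. Concretely, in each of the two normalized cases I would enumerate the quadrilaterals $Q$ all of whose vertices are boundary marked points, read off the $\bc$-vector $\alpha$ by computing shear coordinates against the initial laminations, and check that the two puncture-coordinates are pinned down, so that $\alpha$ is the unique positive root satisfying~\eqref{eq:system}. Because none of the four sides or two diagonals of such a $Q$ touch the puncture, the laminations spiralling into it cross these arcs in a fixed, tagging-independent pattern; this is the content of Figure~\ref{fig:D_n-roots}.

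The hard part will be the bookkeeping at the puncture: shear coordinates against a spiralling lamination are sensitive to tags and to the spiralling direction, and a self-folded triangle incident to the puncture can masquerade as a quadrilateral. Handling these degenerate configurations---and confirming that the reflection together with the simultaneous tag and spiral changes genuinely reduces everything to the two listed cases---is the delicate point. The small-rank types $D_4$ and $D_6$ flagged earlier are exactly where the generic picture underlying Lemma~\ref{lem:components} and Corollary~\ref{cor:kernel-Dn} degenerates, which is why they are best checked by hand rather than folded into this argument.
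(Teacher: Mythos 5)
Your overall strategy coincides with the paper's: prove the contrapositive in the surface model, normalize to $n-2\prec_c n-1$, split into the cases $n-2\prec_c n$ and $n\prec_c n-2$, and play the shear coordinates of a quadrilateral with two chord diagonals against the constraints coming from Corollary~\ref{cor:kernel-Dn}. However, there is a genuine gap in how you propose to close the argument: you reduce everything to checking that ``the two puncture-coordinates are pinned down'', i.e., that the $(n-1)$-st and $n$-th entries of the $\bc$-vector are controlled for chord--chord quadrilaterals. That criterion settles only the case $n-2\prec_c n$, where the relevant kernel generator is $\alpha_{n-1}-\alpha_n$: there, two positive-root solutions of~\eqref{eq:system} must have $(n-1,n)$-coordinates $(1,0)$ and $(0,1)$, while a chord cannot distinguish the two spiralling laminations and hence assigns them \emph{equal} shear coordinates --- contradiction. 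But in the case $n\prec_c n-2$ the kernel generator is $\alpha_{n-1}+\alpha_n$, so the two putative solutions have $(n-1,n)$-coordinates $(1,1)$ and $(0,0)$: both patterns already have equal puncture coordinates, so showing that a chord--chord flip forces these two coordinates to agree rules out nothing, and your enumeration would terminate without yielding uniqueness.

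The missing idea is the third coordinate. When~\eqref{eq:system} has two positive-root solutions differing by $\alpha_{n-1}+\alpha_n$, root combinatorics in $D_n$ forces the $(n-2)$-nd coordinate of either solution to equal $1$ (since one of $\alpha\pm(\alpha_{n-1}+\alpha_n)$ is again a root, connectivity of supports leaves no other option). On the other hand, for a quadrilateral whose diagonals are both chords, the shear coordinate assigned to the $(n-2)$-nd elementary lamination is tied to those of the $(n-1)$-st and $n$-th: it is $\pm2$ when they are $\pm1$, and $0$ when they are $0$ --- never $1$. This incompatibility, which is what Fig.~\ref{fig:D_n-roots} actually encodes in the second case, is the step your outline lacks. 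A secondary point: your ``conceptual heart'' paragraph (a genuine ambiguity ``forces the arc to interact with the puncture'') is a restatement of the lemma rather than an argument, so all the logical weight falls on the verification step, which as written is incomplete in the case $n\prec_c n-2$.
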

 \begin{proof}
We will show that, if the arcs corresponding to $x_\lambda$ and $x_\mu$ are both chords, then there is a unique positive root satisfying equation~\eqref{eq:system}. The two cases to be considered, namely $n-2 \prec_c n$ and $n \prec_c n-2$, are pictured in Fig.~\ref{fig:D_n-roots}.

Suppose at f\/irst that $n-2 \prec_c n$ and let $\alpha$ be one of the two positive roots satisfying equation~\eqref{eq:system}. By Corollary~\ref{cor:kernel-Dn}, exactly one among the $(n-1)$-st and the $n$-th simple root coordinates of $\alpha$ is $0$, and the other is $1$. In particular, any diagonal of any quadrilateral supporting this exchange relation must give dif\/ferent shear coordinates to the $(n-1)$-st and $n$-th elementary laminations. However, if both diagonals of a quadrilateral are chords, then they do not distinguish the two elementary laminations, so, in particular, each of the two diagonals assigns either~$\pm 1$ or~$0$ to both simultaneously.

Suppose now that $n \prec_c n-2$ and let $\alpha$ be again one of the two positive roots satisfying equation~\eqref{eq:system}. Since one of $\alpha\pm(\alpha_{n-1}+\alpha_n)$ is also a root, the $(n-2)$-nd coordinate of $\alpha$ is $1$ while both the $(n-1)$-st and $n$-th coordinates are simultaneously $1$ or $0$. Thus, any diagonal of any quadrilateral supporting this exchange relation must give shear coordinate $\pm 1$ to the $(n-2)$-nd elementary lamination, and equal values to the $(n-1)$-st and $n$-th ones. Now take any quadrilateral whose diagonals are both chords. If its diagonals give shear coordinate $\pm 1$ to both the $(n-1)$-st and $n$-th elementary lamination, then they assign $\pm 2$ to the $(n-2)$-nd elementary lamination. If instead they give shear coordinate~$0$ to both $(n-1)$-st and $n$-th elementary laminations, then they also assign $0$ to the $(n-2)$-nd elementary lamination.
 \end{proof}

 \begin{Lemma} \label{lem:g-vector-of-radii}
The $\bg$-vector of any cluster variable associated to a radius has exactly one among its $(n-1)$-st and $n$-th fundamental weight coordinates equal to~$0$; the other one is~$\pm 1$.
 \end{Lemma}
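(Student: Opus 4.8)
The plan is to reduce the lemma to a short computation in the weight lattice, once the relevant $\bg$-vectors have been located. First I would make the two quantities intrinsic: because the fundamental weights $\omega_j$ and the simple coroots $\alpha_i^\vee$ are dual bases, the $i$-th fundamental weight coordinate of a weight $\lambda$ is precisely the pairing $\langle\lambda,\alpha_i^\vee\rangle$. Thus the assertion is that, for the $\bg$-vector $\lambda$ of a radius, exactly one of $\langle\lambda,\alpha_{n-1}^\vee\rangle$ and $\langle\lambda,\alpha_n^\vee\rangle$ vanishes while the other equals $\pm1$.

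Next I would pin down where the $\bg$-vectors of radii sit. In the once-punctured-disk model the two arcs of the initial triangulation incident to the puncture are the cluster variables attached to the fork nodes $n-1$ and $n$, so their $\bg$-vectors are $\omega_{n-1}$ and $\omega_n$. Rotating the surface sends radii to radii and, on $\bg$-vectors, realizes the passage $\lambda\mapsto c\lambda$ between consecutive members of a $\tau_c$-orbit; hence every radius lies in the orbit of $\omega_{n-1}$ or of $\omega_n$. Combined with the classification of $\bg$-vectors as the elements $c^m\omega_i$, this shows that the $\bg$-vector of any radius has the form $c^m\omega_{n-1}$ or $c^m\omega_n$, and in particular belongs to $W\omega_{n-1}\cup W\omega_n$ --- that is, it is a weight of one of the two half-spin representations.

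The lemma then follows from a one-line computation in the standard orthonormal realization, where $\alpha_{n-1}^\vee=e_{n-1}-e_n$ and $\alpha_n^\vee=e_{n-1}+e_n$ and the half-spin weights are exactly the vectors $\mu=\frac12\sum_{i=1}^n\epsilon_ie_i$ with each $\epsilon_i=\pm1$. Indeed $\langle\mu,\alpha_{n-1}^\vee\rangle=\frac12(\epsilon_{n-1}-\epsilon_n)$ and $\langle\mu,\alpha_n^\vee\rangle=\frac12(\epsilon_{n-1}+\epsilon_n)$, so exactly one of the two is $0$ and the other is $\pm1$, depending on whether $\epsilon_{n-1}=\epsilon_n$ or $\epsilon_{n-1}=-\epsilon_n$. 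This is the claimed dichotomy.

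The step I expect to be the genuine obstacle is the middle one: justifying cleanly that radii are precisely the cluster variables whose orbit index is $n-1$ or $n$, so that their $\bg$-vectors are half-spin weights. This rests on the surface dictionary and on careful bookkeeping of the taggings and spiralling directions at the puncture. An alternative that sidesteps the weight lattice is to argue entirely on the surface: the two coordinates in question are the shear coordinates of the radius against the elementary laminations $L_{n-1}$ and $L_n$, which spiral into the puncture in opposite directions, so the radius runs parallel to exactly one of them near the puncture --- contributing $0$ there and $\pm1$ against the other. In that approach the care goes into fixing the sign conventions in the local crossing count.
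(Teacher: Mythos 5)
Your two endpoint steps are correct: fundamental weight coordinates are indeed the pairings with simple coroots, and the closing computation is right --- if the $\bg$-vector of a radius lies in $W\omega_{n-1}\cup W\omega_n$, then writing it as $\frac12\sum_i\epsilon_ie_i$ and pairing with $\alpha_{n-1}^\vee=e_{n-1}-e_n$ and $\alpha_n^\vee=e_{n-1}+e_n$ gives exactly the claimed dichotomy. The genuine gap is the middle step, which you yourself flag: the assertion that the rotation of the once-punctured disk realizes $\tau_c$ on $\bg$-vectors, so that the radii are precisely the cluster variables with $\bg$-vectors $c^m\omega_{n-1}$ or $c^m\omega_n$. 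This statement is true, but it is not bookkeeping; it is where the entire content of the lemma sits, and it is not quotable off the shelf from the literature the paper uses. For an arbitrary acyclic $c$ one must match the rotation direction with $c$, track the tagging/spiralling conventions at the puncture, and split into cases according to the relative order of $s_{n-2}$, $s_{n-1}$, $s_n$ in $c$ --- i.e., exactly the kind of lamination analysis you were hoping to avoid. The alternative is to import an external theorem: $\tau_c$ does come from an automorphism of the coefficient-free algebra (as recalled in Section~1), cluster automorphisms of a once-punctured disk are induced by tagged mapping classes and disk symmetries (for $n\geq 5$; $D_4$ has extra symmetry, but it is checked by hand in Section~2 anyway), and all of these fix the unique puncture, hence permute radii among themselves; since the $\tau_c$-orbits partition $\Pi(c)$ and the orbit of $\omega_i$ for $i\leq n-2$ contains the chord $x_{\omega_i}$, radii can then only occur in the orbits of $\omega_{n-1}$ and $\omega_n$. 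Without some such argument, your key claim is an assertion, not a proof.

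For comparison, the paper's actual proof is essentially your fallback paragraph made precise, and it never leaves the surface. By \cite[equation~(1.13)]{NZ12}, the $\bg$-vector of the cluster variable of an arc equals the shear coordinates, with respect to the initial triangulation, of the arc's elementary lamination drawn on the \emph{reflected} surface; these coordinates are indexed by the arcs of the initial triangulation. (Note the roles in your phrasing are reversed: shear coordinates pair a lamination against a triangulation, so what is computed is the shear coordinate of the radius's lamination at the two initial radii, not ``of the radius against $L_{n-1}$ and $L_n$''; the reflection trick is precisely what converts the $\bc$-vector-type recipe into a $\bg$-vector recipe.) Since only the last two entries matter, the computation is local to the digon of the initial triangulation (Fig.~\ref{fig:D_n-weights}), and the spiralling rules of \cite[Fig.~36]{FT12} show that a radial lamination with non-zero $n$-th shear coordinate has vanishing $(n-1)$-st coordinate and vice versa, the non-zero value being $\pm1$; the two cases $n\prec_c n-2$ and $n-2\prec_c n$ are related by a flip, which only affects signs. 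If you carry out that local analysis, the lemma follows directly and your weight-lattice reformulation, while a nice geometric gloss, is no longer needed.
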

\begin{proof}
\cite{FST08,FT12} do not contain an explicit recipe to compute the $\bg$-vector of the cluster variable associated to an arc. An easy rule, though, can be obtained using \cite[equation~(1.13)]{NZ12}: it suf\/f\/ices to ref\/lect our surface and compute the shear coordinates of the elementary lamination corresponding to the desired arc with respect to the initial triangulation (see, e.g.,~\cite[Proposition~5.2]{Re14} or~\cite[Lemma~8.6]{FeTu15}). Since we only care for the last two entries it will suf\/f\/ice to look inside the unique digon in the initial triangulation. We are in the situation depicted in Fig.~\ref{fig:D_n-weights}.

Suppose at f\/irst that $n \prec_c n-2$. It follows immediately from \cite[Fig.~36]{FT12} that, in order for a radial elementary lamination to have non-zero $n$-th shear coordinate, it has either to start from the side of the digon lying on the boundary of the disk and then spiral clockwise to the puncture, or cross the other side and spiral counterclockwise. In either case such a lamination will have $(n-1)$-st shear coordinate equal to $0$. The situation reverses for radial elementary laminations having non-zero $(n-1)$-st shear coordinate.

The case $n-2 \prec_c n$ is identical: the two are related by a f\/lip and the only ef\/fect this has on the last two shear coordinates is to change some of the signs.
 \end{proof}

 \begin{figure}[t]\centering
 \includegraphics[scale=0.6]{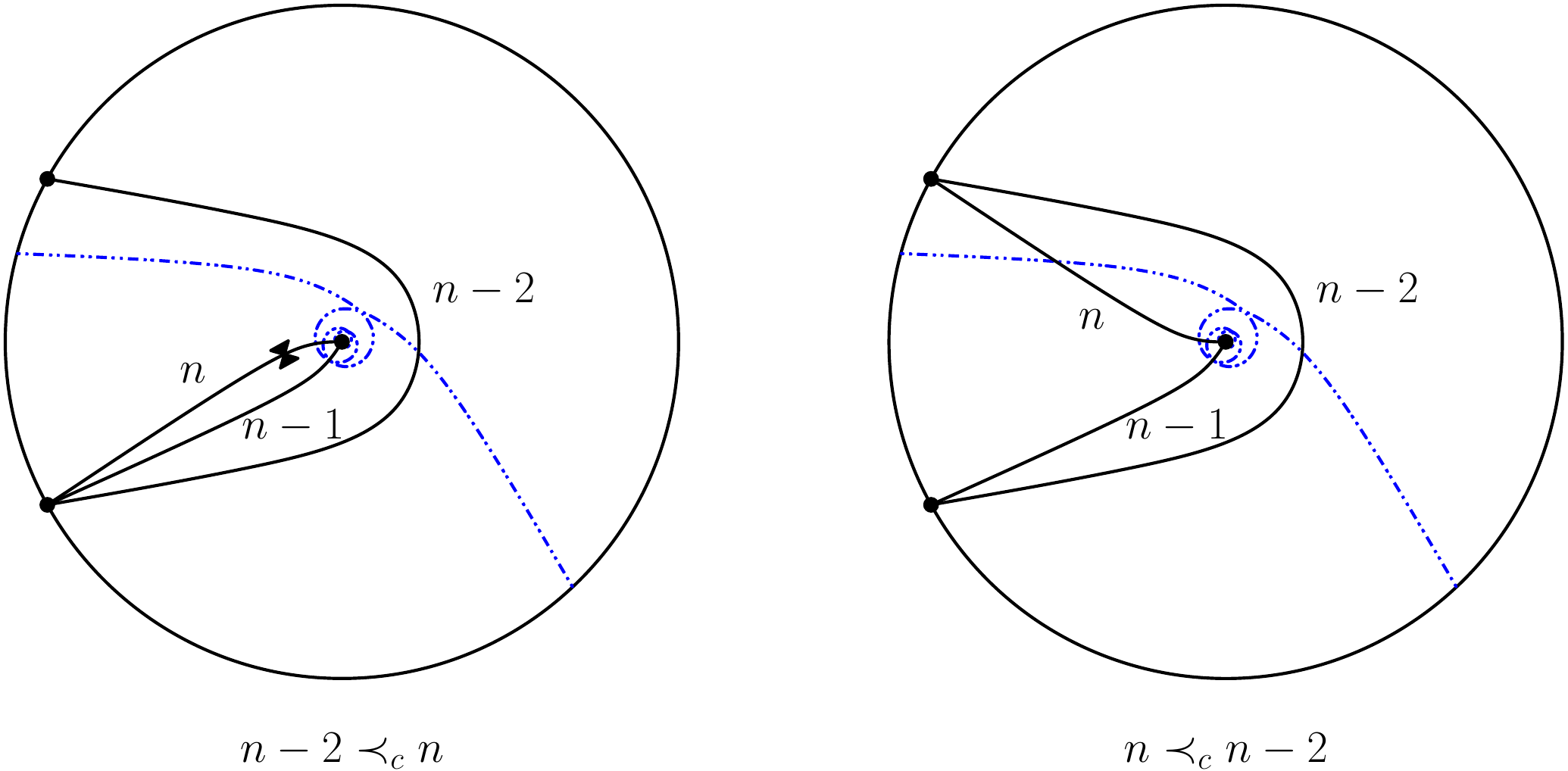}
 \caption{Radial laminations with non-zero $n$-th shear coordinate.}
 \label{fig:D_n-weights}
 \end{figure}

We are f\/inally ready to conclude the proof of Theorem~\ref{thm:main} in type~$D_n$. Suppose~$\lambda$ and~$\mu$ exchangeable weights are such that equation~\eqref{eq:system} has two solutions~$\alpha$ and~$\alpha'$. In particular $\alpha' = \alpha \pm (\alpha_{n-1}-\alpha_n)$ if $n-2 \prec_c n$ and $\alpha' = \alpha \pm (\alpha_{n-1}+\alpha_n)$ if $n \prec_c n-2$.

By Lemma~\ref{lem:is-radius} we can assume that $\lambda$ is the $\bg$-vector of a radius; in particular, by Lemma~\ref{lem:g-vector-of-radii}
 \begin{gather*}
 \langle \lambda, \alpha_{n-1}\pm \alpha_n \rangle = \pm 1
 \end{gather*}
 and thus
 \begin{gather*}
 \langle \lambda, \alpha' \rangle = \langle \lambda, \alpha \rangle \pm 1.
 \end{gather*}
Therefore, since the pairing $\langle\cdot\,,\cdot\rangle$ is integer-valued when computed on (co-)roots and weights, $\alpha$~and~$\alpha'$ cannot both satisfy equation~\eqref{eq:dual}.

\begin{Remark}In view of some ongoing work of the f\/irst author with Nathan Reading it appears that a modif\/ied version of \cite[Propositions~5.1 and~5.2]{Ste13} holds in af\/f\/ine types as well. We expect that Theorem~\ref{thm:main}, or a ref\/ined version of it, could hold there too. The analysis required to establish it, though, will probably be more complicated than the f\/inite case one because the corank of $B_c$ can be as big as $4$ and the argument of Remark~\ref{rk:induction} does not apply.
\end{Remark}

\subsection*{Acknowledgements}
This paper was completed during a visit to Durham University; the f\/irst author would like to thank both Grey College and the Department of Mathematical Sciences for the hospitality received. We are also grateful to Anna Felikson for many fruitful discussions, and to Nathan Reading for his help with some early attempts at Theorem~\ref{thm:main} and for his comments on a~pre\-li\-mi\-nary version of this paper. Finally we would like to thank our anonymous referees for pointing out a f\/law in an earlier version of Lemma~\ref{lem:components} and for several useful comments.

\pdfbookmark[1]{References}{ref}
\LastPageEnding

\end{document}